\newcommand\reallywidehat[1]{%
\savestack{\tmpbox}{\stretchto{%
  \scaleto{%
    \scalerel*[\widthof{\ensuremath{#1}}]{\kern.1pt\mathchar"0362\kern.1pt}%
    {\rule{0ex}{\textheight}}
  }{\textheight}%
}{2.4ex}}%
\stackon[-6.9pt]{#1}{\tmpbox}%
}
\newcommand{\mainsectionstyle}{%
  \renewcommand{\@secnumfont}{\bfseries}
  \renewcommand\section{\@startsection{section}{2}%
    \z@{.5\linespacing\@plus.7\linespacing}{-.5em}%
    {\normalfont\bfseries}}%
}
\newtheorem{prop}{Proposition}[section]
\newtheorem{coro}[prop]{Corollary}
\newtheorem{defi}[prop]{Definition}
\newtheorem{lemm}[prop]{Lemma}
\newtheorem{nota}[prop]{Notation}
\newtheorem{pf-thm}[prop]{proof of theorem}
\newtheorem{rema}[prop]{Remark}
\newtheorem{theo}[prop]{Theorem}
\newtheorem*{ack}{Acknowledgments}
\def\rank{\mathrm{rank}}
\def\tr{\mathrm{tr}}
\def\trp{\mathrm{tr}_g^\perp}
\DeclarePairedDelimiterX{\norm}[1]{\lVert}{\rVert}{#1}
\newcommand{\Hquad}{\hspace{0.5em}} 
\numberwithin{equation}{section}
\title{CONFORMAL EMBEDDINGS VIA HEAT KERNEL}
\date{}
\author{Zhitong Su}
\newcommand{\Addresses}{{
  \bigskip
  \footnotesize
  \begin{flushright}Zhitong Su\\ 
  \textsc{
  MOE-LCSM\\ School of Mathematics and Statistics\\ Hunan Normal University\\
  Changsha 410081, P. R. China}\par
  \textit{E-mail address}:  \texttt{suzht@hunnu.edu.cn}\end{flushright}

}}
\begin{document}

%

\maketitle

\begin{abstract}
For any n-dimensional compact Riemannian Manifold $M$ with smooth metric $g$, by employing the heat kernel embedding introduced by B\'erard-Besson-Gallot (1994, \cite{BBG}), we intrinsically construct a canonical $t$-family of conformal embeddings $C_{t,k}$: $M\rightarrow\mathbb{R}^{q(t)}$, with $t>0$ sufficiently small, $q(t)\gg t^{-\frac{n}{2}}$, and $k$ as a function of $O(t^l)$ with $l\geq 2$ in proper sense. Our approach involves finding all these canonical conformal embeddings, which shows the distinctions from  the  isometric embeddings introduced by Wang-Zhu (2015, \cite{WZ}). 
\end{abstract}

\section{Introduction}
Let $(M,g)$ be an n-dimensional compact Riemannian manifold, the following classical problem, called  the \textbf{isometric embedding problem} is studied in differential geometry. Does there exist an embedding $u:M\longrightarrow \mathbb{R}^N$ for some $N$ such that 
\begin{equation*}
    u^*g_{\mathrm{can}}=g,
\end{equation*}
where $g_{\mathrm{can}}$ is the Euclidean metric in $\mathbb{R}^N$?
In 1956, J. Nash famously proved in \cite{N2} that there exists a $C^r$-class isometric embedding for $g\in C^r$, with $r\geq 3$ or $r=\infty$. Furthermore, for any compact $n$-dimensional Riemannian manifold, the optimal value of $N$ he found was $N=\frac{3}{2}n(n+1)+4n$.\par

In \cite{N2}, Nash developed an iteration nowadays known as the Nash-Moser theorem to address the problem of losing differentiability when taking the usual Newton iteration.
Decades later, M.G\"unther (1989, \cite{G1}) significantly simplified Nash's proof by applying an elliptic operator to develop a different iteration, which avoids the loss of differentiability (see Section \ref{Gunther}). This allows one to simply use the usual Banach fixed point theorem to conclude the proof. His approach is also exposed in the proceedings \cite{G2} of  ICM 1990 Kyoto. \par

Nash and G\"unther's construction of the isometric embedding is highly flexible. By employing this method, any $C^{r\geq 3}$ embedding $u: M \longrightarrow \mathbb{R}^N$ such that the induced metric is less than or equal to $g$ can serve as a start to produce an isometric embedding. This great flexibility, on the other hand, often results in the isometric embeddings being \textbf{noncanonical}.\par
Contrastingly, in 1994, B\'erard, Besson, and Gallot \cite{BBG} constructed an  `asymptotically isometric' embedding using  the heat kernel of the manifold. This embedding, referred to as the \textbf{(normalized) heat kernel embedding} throughout, maps a compact Riemannian manifold $M$ into $\ell^2$, the space of square summable series, and is  constructed as follows:
\begin{equation*}
    \Psi_t: x\mapsto \sqrt{2}(4\pi )^{\frac{n}{4}} t^{\frac{n+2}{4}}\cdot \big\{ e^{-\lambda_j t/2}\phi_j(x) \big\}_{j\geq 1}, \text{ for } t>0,
\end{equation*}
where $\lambda_j$ is the $j$th eigenvalue of the Laplacian $\Delta=\mathrm{tr}_g \nabla^2$ of $(M,g)$, here $\nabla$ is the Levi-Civita connection, and $\{\phi_j\}_{j\geq 0}$ is an $L^2$ orthonormal eigenbasis of $\Delta$. It is worth noting that the embedding $\Psi_t$ is \textbf{canonical} due to the fact that it is constructed by the heat kernel and therefore the spectral geometry of $(M,g)$ uniquely determines it. A more precise formula in \cite{BBG} that justifies the above statement is the following, indicating that $\Psi_t$ tends to an isometry in the following sense:
\begin{equation*}
    \Psi^*_t g_{\mathrm{can}}= g+\frac{t}{3} (\frac{1}{2}\mathrm{Scal}_g\cdot g-\mathrm{Ric}_g)+O(t^2),
\end{equation*}
where the $g_\mathrm{can}$ is the standard Euclidean metric in $\ell^2$, $\mathrm{Scal}_g$ is the scalar curvature of $(M,g)$, $\mathrm{Ric}_g$ is the Ricci curvature of $(M,g)$, and the convergence is in the $C^r$ sense for any $r>0$.\par
In light of the facts that Nash and G\"unther's isometric methods (\cite{N2},\cite{G1}) being flexible but far from being canonical, and  B\'erard, Besson, and Gallot's heat kernel embedding (\cite{BBG}) being canonical but not yet exactly isometric,  Wang and Zhu (2015, \cite{WZ}) embarked on a study aimed at finding a canonical isometric embedding of a compact Riemannian manifold into $\mathbb{R}^q$ for $q\gg 1$ by using the heat kernel of this manifold.  Their approach begins by first modifying the  heat kernel embedding $\Psi_t$ in \cite{BBG} to a better approximation with an error term of $O(t^l)$ for any $l\geq 2$, and  continues by perturbing such an `almost isometric' embedding to an isometric one. Namely for any $l\geq 1$, by using the $\Psi_t$, they find a canonical family of `almost' isometric embeddings $\Tilde{\Psi}_t: M \longrightarrow \ell^2$ such that
\begin{equation*}
    \Tilde{\Psi}_t^* g_{\mathrm{can}}=g+O(t^l)
\end{equation*}
in the $C^r$ sense for $r>0$. Subsequently, they find a unique $C^{r,\alpha}$ isometric embedding $I_t:M\longrightarrow\mathbb{R}^{q(t)}$ such that 
\begin{equation*}
    \norm{ I_t-\Tilde{\Psi}_t}_{C^{r,\alpha}(M)}= O(t^{l+\frac{1}{2}-\frac{r+\alpha}{2}}),
\end{equation*}
where $q(t)\geq t^{-\frac{n}{2}-\rho}$, $\rho>0$, $r+\alpha<l+\frac{1}{2}$, $0<\alpha<1$, $r\geq 2$, $t\in (0,t_0)$ for some $t_0>0$ depending on $r,\alpha, l,$ and $g$. Additionally, we note  B\'erard, Besson, and Gallot's  heat kernel embeddings can be used in many other ways. For further references, historical contexts, and other uses of this heat kernel embedding, see \cite{Pt} and \cite{Te}.\par

From the  view of K\"ahler geometry and complex geometry,  one may seek more embeddings of this canonical type. In analogy to the Kodaira embedding (see, e.g. \cite{GH}) in K\"ahler geometry that preserves the holomorphic structure, in the current paper we find a family of canonical embeddings of compact Riemannian manifolds that preserve the \textbf{conformal} structure. Indeed, an isometric one is already a conformal one, but starting with the heat kernel `almost' isometric embedding and looking into G\"unther's method, we have shown that by requesting the result map to be conformal and keeping each step done canonically, one can find  \textbf{a  family of canonical conformal embeddings} of $(M,g)$ in Euclidean space, with the isometric embedding constructed in \cite{WZ} as one special case among them. Note that the present paper partially fulfills the proposal of Wang and Zhu, specifically the part concerning the construction of canonical conformal embeddings, see \cite[Introduction]{WZ}.

\par
Throughout, conformal embeddings are referred to embeddings that are conformal maps, see Definition \ref{conformal embeddings}.
In the following, we present the main theorems,  fixing the constant $\rho>0$, and $0<\alpha<1$, and using Einstein summation notation throughout.

\begin{theo}[Proposition \ref{part 1 of mt}] \label{main thm 1.1}
Let $(M,g)$ be a smooth n-dimensional compact Riemannian manifold without boundary, $g$ be the smooth Riemannian metric of $M$. Then for any integer $l\geq 2$ and a finite sequence of functions $\eta_1, \cdots, \eta_{l-1} \in C^\infty(M,\mathbb{R})$, there exists a $t$-family of \emph{canonical  almost conformal} embeddings $\Psi_{t,g(t),\eta_i}:M\rightarrow \ell^2$, such that
    \begin{equation*}
        (\Psi_{t,g(t),\eta_i})^*g_{\mathrm{can}}-\frac{\mathrm{tr}_g(\Psi_{t,g(t),\eta_i})^*g_{\mathrm{can}}}{n}g=O(t^l)
    \end{equation*}
    as $t\rightarrow 0_+$, where the convergence is in $C^r(M,\mathbb{R})$ sense for any $r\geq 0$.
\end{theo}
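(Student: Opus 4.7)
The plan is to imitate the Wang--Zhu strategy of modifying the BBG heat kernel embedding, but instead of imposing the stronger isometry condition at every order, to impose only the weaker conformality condition. Since conformality differs from isometry by exactly one scalar function (the conformal factor), at each order there is a one-function family of admissible perturbations, and this family is naturally parametrized by the data $\eta_i \in C^\infty(M,g)$.

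First, recall from BBG the expansion $\Psi_{t}^* g_{\mathrm{can}} = g + \frac{t}{3}\big(\frac{1}{2}\mathrm{Scal}_g\cdot g - \Ric_g\big) + O(t^2)$. The traceless part of $\Ric_g$ (nonzero in general) is precisely the obstruction to conformality at order $t$, so the strategy must modify $\Psi_t$ to cancel this residue and all higher-order traceless residues. To this end, I introduce a formal metric perturbation $g(t) := g + \sum_{i=1}^{l-1} t^i h_i$ and consider the embedding $\Psi_{t,g(t)}$ built from the spectral data of $(M,g(t))$. Expanding
\[
\Psi_{t,g(t)}^* g_{\mathrm{can}} = g + \sum_{j=1}^{l-1} t^j B_j(h_1,\ldots,h_j; g) + O(t^l),
\]
a variational computation parallel to the isometric case in Wang--Zhu yields $B_j = h_j + L_j(h_1,\ldots,h_{j-1}; g)$, where $L_1 = \frac{1}{3}\big(\frac{1}{2}\mathrm{Scal}_g\cdot g - \Ric_g\big)$ and each $L_j$ is a universal polynomial expression in $g$, its curvature, and the lower-order corrections $h_1,\ldots,h_{j-1}$.

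Now I impose the almost conformality requirement. Writing $T^{\circ} := T - \frac{\tr_g T}{n}\, g$ for the $g$-trace-free part of a symmetric $2$-tensor, the condition
\[
\Psi_{t,g(t)}^* g_{\mathrm{can}} - \frac{\tr_g \Psi_{t,g(t)}^* g_{\mathrm{can}}}{n}\, g = O(t^l)
\]
amounts to $B_j^{\circ} = 0$ for $j = 1,\ldots,l-1$, i.e., $h_j^{\circ} = -L_j^{\circ}$. This fixes the trace-free part of $h_j$ uniquely in terms of $(g, h_1,\ldots,h_{j-1})$, while leaving the scalar trace $\tr_g h_j$ as a free parameter, which I identify with $n\eta_j$. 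Solving the system inductively in $j$ produces $h_j = h_j(g, \eta_1, \ldots, \eta_j)$ canonically, and hence the family $\Psi_{t,g(t),\eta_i}$ parametrized by $(\eta_1,\ldots,\eta_{l-1}) \in C^\infty(M,g)^{l-1}$.

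The main obstacle is the variational computation of the coefficients $L_j$: one must track how metric perturbations propagate through the Minakshisundaram--Pleijel heat kernel expansion into the pullback metric at each order. Wang--Zhu's computation in the isometric setting provides the template; the crucial simplification in the present conformal setting is that the linearized operator $h_j \mapsto B_j^{\circ}$ is simply the projection $h_j \mapsto h_j^{\circ}$, whose kernel (scalar multiples of $g$) accounts precisely for the $\eta_j$ freedom while its image (trace-free symmetric $2$-tensors) contains $-L_j^{\circ}$ tautologically, so no elliptic inversion is needed. Convergence in $C^r$ for every $r \ge 0$ is then inherited from the smooth dependence of the heat kernel and its asymptotic coefficients on $(g, h_1,\ldots,h_{l-1})$, exactly as in BBG and Wang--Zhu.
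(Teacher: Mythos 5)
Your proposal is correct and follows essentially the same route as the paper: both introduce the perturbed metric $g(t)=g+\sum_{i=1}^{l-1}t^ih_i$, expand $\Psi_{t,g(t)}^*g_{\mathrm{can}}$ via the BBG/Wang--Zhu coefficients, and solve order by order the traceless equations $h_j^{\circ}=-L_j^{\circ}$, observing that the kernel of the trace-free projection is spanned pointwise by $g$ so that the residual freedom at each order is exactly one scalar function $\eta_j=\tr_g h_j/n$. The paper's Proposition on the modified embedding is precisely this induction, with your $L_j$ playing the role of its $A_j(g)$ plus the cross terms $A_{i,k}(h_1,\dots,h_k)$.
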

Note here that $\Psi_{t,g(t),\eta_i}$ is an \textit{almost conformal} embedding since the error term $O(t^l)$ is small when $t\rightarrow0_+$. And it is a \textit{canonical} embedding in the sense that it is determined by the geometry of $(M,g)$.\par
As we will see in Proposition \ref{part 1 of mt}, for given $\eta_i\in C^\infty (M,\mathbb{R})$ and each $1\leq i\leq l-1$,  we will uniquely determine an  $h_i\in \Gamma(\mathrm{Sym}^{\otimes 2}(T^*M))$ depending only on $(M,g)$ and $\eta_i$, such that \eqref{system of solving h_i} holds, and especially $\frac{\tr_g h_i}{n}=\eta_i$. Then for the metric $g(t):=g+\sum\limits_{i=1}^{l-1}h_it^i$, the map  $\Psi_{t,g(t),\eta_i}$ is defined as the heat kernel embedding (in the sense of \cite{BBG}) of $(M,g(t))$.\par
Given this $\Psi_{t,g(t),\eta_i}$, as quoted verbatim from \cite{WZ}, we have the following definition:
    \begin{defi}\emph{(Truncated embedding)}\label{truncated embedding}
Let 
\begin{equation*}
    \Pi_q :\ell^2 \longrightarrow \mathbb{R}^q
\end{equation*}be the projection of $\ell^2$ to the first $q$ components. To get a finite-dimensional almost conformal embedding, we introduce the \textbf{\emph{truncated embedding}}
\begin{equation*}
    \Psi^{q(t)}_{t,\eta_i} :=\Pi_q\circ\Psi_{t,g(t),\eta_i}: (M,g)\longrightarrow\ell^2\overset{\Pi_{q(t)}}{\longrightarrow} \mathbb{R}^{q(t)}.
\end{equation*}

\end{defi}
The following is the second part of our main theorem.
\begin{theo}[Proposition \ref{conformal immersion} and \ref{Inject}]\label{mainthm 1.3}
    
    Under the proceeding assumption, we have:
    \item For any integer $r\geq 2$ and $l$ satisfying $r+\alpha <l+\frac{1}{2}$, there exists a constant $t_0>0$ depending on $r,\alpha, l, g$ and $\eta_i $, such that for any $0<t\leq t_0$, there exists a family  of conformal embeddings $C_{t,k_t}$, parametrized by $k_t\in K:=\{ k_t\in C^{r,\alpha}(M,\mathbb{R})| \| k_t \|_{C^{r,\alpha}(M)}= O(t^l) \}$,   such that for  any  $k_t\in K$,  each truncated embedding $\Psi_{t,\eta_i}^{q(t)}$  can be perturbed to a unique $C^{r,\alpha}$ conformal embedding 
    \begin{equation*}
        C_{t,k_t} : M\rightarrow\mathbb{R}^{q(t)},
    \end{equation*}
    where the dimension $q(t)\geq t^{-\frac{n}{2}-1}$.
    
    Moreover, the resulting conformal map satisfies the estimate:
   \begin{gather*}
       \|C_{t,k_t}-\Psi_{t,g(t),\eta_i}\|_{C^{r,\alpha}} = O(t^{l+\frac{1-r-\alpha}{2}}),\\
            \|C_{t,k^{a}_t}-C_{t,k^{b}_t} \|_{C^{r,\alpha}}< C(r,\alpha,M,g,\eta_i)t^{-\frac{r+\alpha}{2}} \|k_t^a-k_t^b\|_{C^{r,\alpha}}, \quad \forall\, k_t^a,k_t^b \in K.
    \end{gather*} 

\end{theo}

\begin{rema}[on $t^l$]
    The requirement $l+\frac{1}{2}>r+\alpha$ is crucial for Theorem \ref{mainthm 1.3}, which can be viewed as an implicit function theorem following Wang-Zhu \cite{WZ}.  As will be seen in \eqref{importance on l}, the final step of applying the Banach fixed point theorem fails without $l+\frac{1}{2}>r+\alpha$. This is why we first prepare Theorem \ref{main thm 1.1} that  perturbs the metric $g$ to ensure the pullback metric $(\Psi_{t,g(t),\eta_i})^*g_{\rm can}$ has an error of order $t^l$; this technique originates from  \cite{WZ} as well. 
\end{rema}

\begin{rema}

In this context, we start with the `almost' isometric heat kernel embedding $\Psi_t$ in \cite{BBG}, to construct conformal embeddings $C_{t,k}$, which encapsulates only the intrinsic information of $(M,g)$. 
We emphasize that this intrinsic dependence leads to the key property of canonicity in $C_{t,k}$, which is our goal here,  even though the construction resembles that of an isometric embedding.

To address a potential concern, we note that one can certainly compose  $C_{t,k}$ with any M\"obius transformation of $\overline{\mathbb{R}^N}$, the one-point compactification of $\mathbb{R}^N$, to achieve another conformal embedding. However, such  operations are not determined by the intrinsic property of $(M,g)$ and thus do not preserve canonicity.\par
Due to the same reason, it is also noteworthy that finding the optimal dimension $q(t)$ is not our goal here, as lower dimensions can result in less canonical embeddings.
\end{rema}

 \par
 The main techniques in this article can be described as a process of `recovering the trace', and can be outlined as follows. Notice that a map $u:M \longrightarrow \mathbb{R}^N$ being \textbf{free} (see Definition \ref{def of P and P_c}) is a strong condition, which will ensure the existence and uniqueness of the solution to the equation  (see Lemma \ref{Free mapping's linear algebra lemma}):
\begin{equation}\label{0.5}
P(u)\cdot v=\begin{bmatrix}
  \xi& f
\end{bmatrix}^T,
\end{equation}
 where the definition of $P(u)$ is in Definition \ref{def of P and P_c}. However, to address our conformal question, we need to look into the equation
\begin{equation}\label{0.6}
    P_c(u)\cdot v=\begin{bmatrix}
  \xi & f-\frac{\tr_g f}{n}g
\end{bmatrix}^T,
\end{equation}  where $P_c(u)$ is obtained from $P(u)$ by subtracting its own trace of the second derivative part (see Definition \ref{def of P and P_c}). The challenge lies in the fact that $P_c(u)$ is not of full rank. To overcome this difficulty, we point out that the solution of \eqref{0.5} is one special solution of \eqref{0.6}. And precisely describing the kernel of $P_c(u)$, which corresponds to the trace that to be recovered, allows us to obtain all the solutions in the following manner: `solutions of \eqref{0.6}'= `one special solution' +`kernel of $P_c(u)$'(see Remark \ref{essenttial remark}). In this context the `one special solution' corresponds to the isometric embeddings attained by Wang and Zhu in \cite{WZ}, hence we will closely follow their construction.  This coincides with the general perspective that isometric embeddings are special cases of conformal ones.

\begin{ack}\normalfont
The author greatly appreciates his advisor Xiaowei Wang for suggesting the
question and for the enlightening discussion and constant support. He also thanks Bin Guo, Jacob Sturm, Xi-Ping Zhu, Wei Yuan, and Xian-Tao Huang for helpful discussions, and  Changzheng Li for thoughtful guidance. He thanks
the anonymous referees for careful readings as well as   valuable feedback, which helped to clarify some technical points. This paper serves as a part of the author's Ph.D thesis.

\end{ack}

\section{Heat kernel embeddings  and modifications to almost conformal maps}
Let $(M,g)$ be an n-dimensional compact Riemannian manifold with smooth metric $g$. Denote the eigenvalues of the Laplacian of $(M,g)$ as $0=\lambda_0<\lambda_1<\lambda_2<\cdots$, and let $\{\phi_j\}_{j\geq 0}\subset C^\infty (M)$ be a corresponding $L^2$-orthonormal basis of the real eigenfunctions. In other words, this means $\Delta_g \phi_j=-\lambda_j \phi_j$, and $\int_M \phi_i\phi_j d\mathrm{vol}_g=\delta_i^j$, for $i,j\geq 0$. The heat kernel of $(M,g)$ is:
\begin{equation*}
    H(x,y,t)=\sum\limits_{j=1}^\infty e^{-\lambda_j t} \phi_j(x)\phi_j(y),
\end{equation*}
where $x,y\in M$, and $t>0$. Recall the definition in \cite{BBG} regarding almost isometric heat kernel embeddings into $\ell^2$:
\begin{defi}\label{defi of psi_t}
We call the family of maps
\begin{equation*}
    \Phi_t: \begin{matrix}M&\longrightarrow&\ell^2\\
    x & \longmapsto & \{e^{-\lambda_jt/2}\phi_j(x)\}_{j\geq 1}
    \end{matrix} \quad \text{ for } t>0
\end{equation*}
the \emph{\textbf{heat kernel embeddings}},   and call $\Psi_t=\sqrt{2}(4\pi)^{n/4}t^{\frac{n+2}{4}}\cdot \Phi_t$ the \emph{\textbf{normalized heat kernel embeddings}}.
\end{defi}
One main theorem in \cite{BBG} can be phrased as the following:
\begin{theo}\label{thm-mainthm of BBG}
For $t\rightarrow 0_+$, there is an expansion
\begin{equation}\label{mainthm of BBG}
    \Psi^*_t g_{\mathrm{can}}=g+\sum\limits_{i=1}^l t^i A_i(g)+O(t^{l+1}),
\end{equation}
in the $C^r$ sense for any $r\geq 0$, with
\begin{equation*}
    A_1=\frac{1}{3}(\frac{1}{2}S_g\cdot g-\mathrm{Ric}_g),
\end{equation*}
where the $g_{\mathrm{can}}$ is the metric of $\ell^2$, $S_g$ is the Scalar curvature, the $A_i$'s are universal polynomials of the covariant differentiations of the metric $g$ and its curvature tensors up to order $2i$.
\end{theo}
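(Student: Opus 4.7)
The strategy is to rewrite $\Psi_t^*g_{\mathrm{can}}$ as a second mixed derivative of the heat kernel on the diagonal, and then substitute the Minakshisundaram--Pleijel (MP) parametrix. For tangent vectors $X,Y\in T_{x_0}M$, the spectral expansion $H(x,y,t)=\sum_{j\ge 0}e^{-\lambda_j t}\phi_j(x)\phi_j(y)$ gives
\begin{equation*}
(\Psi_t^* g_{\mathrm{can}})(X,Y)(x_0)= 2(4\pi)^{n/2} t^{(n+2)/2}\sum_{j\ge 1} e^{-\lambda_j t} X(\phi_j)(x_0) Y(\phi_j)(x_0) = 2(4\pi)^{n/2} t^{(n+2)/2}\, X_x Y_y H(x,y,t)\big|_{x=y=x_0},
\end{equation*}
where $X_x$ means $X$ acting in the $x$-slot. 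So everything reduces to analyzing $\partial_{x^i}\partial_{y^j} H|_{\text{diag}}$.

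I would next substitute the MP parametrix
\begin{equation*}
H(x,y,t) = (4\pi t)^{-n/2} e^{-\sigma(x,y)/(4t)} \sum_{k=0}^N u_k(x,y)\, t^k + R_N(x,y,t), \qquad \sigma(x,y) := d(x,y)^2,
\end{equation*}
(with a standard cutoff near the diagonal absorbed into $R_N$) and differentiate by Leibniz. The universal diagonal identities $\sigma|_{\text{diag}}=0$, $d_x\sigma|_{\text{diag}}=d_y\sigma|_{\text{diag}}=0$, and Synge's formula $\partial_{x^i}\partial_{y^j}\sigma|_{\text{diag}}=-2g_{ij}$, collapse all first-order Gaussian contributions and give
\begin{equation*}
(\Psi_t^* g_{\mathrm{can}})_{ij}(x_0) = g_{ij}(x_0)\, u(x_0,x_0,t) + 2t\,\partial_{x^i}\partial_{y^j} u(x_0,x_0,t) + O(t^{l+1}),
\end{equation*}
where $u = \sum_{k\le N} u_k t^k$. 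The remainder produced by $R_N$ is controlled in $C^r$ by the classical iterated-Duhamel bound $\|R_N\|_{C^r}\lesssim t^{N+1-n/2-r/2}$, which becomes $O(t^{l+1})$ in $C^r$ upon taking $N=N(l,r)$ sufficiently large.

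To identify $A_1$, I use the classical diagonal values $u_0(x,x)\equiv 1$ and $u_1(x,x)=S_g(x)/6$, together with the Van Vleck--Morette formula $u_0(x,y) = [\det g_{ij}(\exp_x^{-1}y)]^{-1/4}$. In Riemann normal coordinates at $x_0$ the expansion $\det g(y)=1-\tfrac13\Ric_{ij}(x_0)y^iy^j+O(|y|^3)$ yields $\partial_{y^i}\partial_{y^j}u_0(0,0)=\tfrac16\Ric_{ij}(x_0)$, and the symmetry $u_0(x,y)=u_0(y,x)$ gives the same for $\partial_{x^i}\partial_{x^j}$. The mixed derivative is then pinned down by a ``diagonal trick'': differentiating the identity $u_0(\gamma(s),\gamma(s))\equiv 1$ twice along any geodesic $\gamma$ with $\gamma(0)=x_0$ (so $\ddot\gamma(0)=0$ in normal coordinates, killing Christoffel contributions) forces
\begin{equation*}
\bigl(\partial_{xx}+2\partial_{xy}+\partial_{yy}\bigr)u_0(0,0)=0,
\end{equation*}
so that $\partial_{x^i}\partial_{y^j}u_0(x_0,x_0)=-\tfrac16\Ric_{ij}(x_0)$. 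Substituting gives $A_1 = \tfrac{S_g}{6}g-\tfrac13\Ric = \tfrac13\bigl(\tfrac12 S_g\cdot g-\Ric\bigr)$, as claimed.

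For the higher $A_i$, the transport equations producing $u_k$ are linear radial ODEs driven by $\Delta u_{k-1}$, so inductively each $u_k(x,y)$ is a universal smooth function whose Taylor data at the diagonal is a universal polynomial in $\nabla^j\mathrm{Rm}(x_0)$ for $j\le 2k$; the same therefore holds for $A_i = g\cdot u_i|_{\text{diag}}+2\partial_x\partial_y u_{i-1}|_{\text{diag}}$, which gives the stated universality with the correct order of derivatives. The $C^r$ convergence is then immediate from the uniform $C^r$ estimate on $R_N$ quoted above. The main obstacle is not the identification of $A_1$---which is algebraic once the symmetry trick is in place---but the careful uniform-in-$x_0$ bookkeeping of (i) the $C^r$-bounds on the parametrix remainder and (ii) the commutation of $\partial_{x^i}\partial_{y^j}$ with the MP asymptotic expansion; both are standard but tedious.
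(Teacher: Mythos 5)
This theorem is not proved in the paper at all: it is quoted from B\'erard--Besson--Gallot \cite{BBG}, so there is no internal argument to compare against, and your sketch is in effect a correct reconstruction of the standard parametrix proof from that reference. The key steps all check out: the reduction of $\Psi_t^*g_{\mathrm{can}}$ to $2(4\pi)^{n/2}t^{(n+2)/2}\,\partial_{x^i}\partial_{y^j}H|_{\mathrm{diag}}$, the Synge identity $\partial_{x^i}\partial_{y^j}\sigma|_{\mathrm{diag}}=-2g_{ij}$ collapsing the Gaussian terms to $g_{ij}u+2t\,\partial_{x^i}\partial_{y^j}u$, and the diagonal data $u_0\equiv 1$, $u_1|_{\mathrm{diag}}=S_g/6$, together with the symmetry trick giving $\partial_{x^i}\partial_{y^j}u_0|_{\mathrm{diag}}=-\tfrac16\mathrm{Ric}_{ij}$, which indeed yield $A_1=\tfrac13(\tfrac12 S_g\,g-\mathrm{Ric}_g)$; what remains (uniform $C^r$ control of the parametrix remainder and of term-by-term differentiation) is standard, as you note.
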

Similar to Proposition 5 of \cite{WZ}, we can perturb the $t$-dependent map   to a family 
of $t$-dependent maps, each of which   is in the form of an almost conformal map. The idea, compared to the isometric case, is to require it to be isometric to some conformal metric $\lambda^2 g$, instead of the metric $g$ itself.
Here is the definition of conformal map upon which we base our understanding:
\begin{defi}\label{conformal embeddings}
    
Assume $f$ is an embedding from $(M,g_M)$ to $(N,g_N)$, which both are Riemannian manifolds, and $M$ is of dimension $m$. $f$ is a \emph{\textbf{conformal map}} from $M$ to $f(M)$, iff
\begin{equation} \label{conformal}
    f^*g_N-\frac{\mathrm{tr}_{g_M}f^*g_N}{m}g_M=0.
\end{equation}
 Note this is equivalent to define a conformal map as the $f$ satisfying $f^*g_N=\lambda^2 g_M$ for some function $\lambda\in C^\infty(M,\mathbb{R})$. An embedding that is a conformal map is called a \emph{\textbf{conformal embedding}}.
\end{defi}
With a slight abuse of language, we also refer to a smooth map or a smooth immersion $f$ as a \textit{conformal map} as long as it satisfies the equation (\ref{conformal}). Much attention will be focused on seeking immersions that satisfy (\ref{conformal}), and ultimately showing
the immersions are embeddings. Therefore, such an abuse won't affect our results.

\begin{rema}\label{denotion of C}
For a 2-tensor $\alpha \in \Gamma(T^*M \otimes T^*M)$, the commonly encountered term in this paper is the traceless part of $\alpha$, given by $ \alpha-\frac{\mathrm{tr}_g\alpha}{n}g$.
Thus, it is useful to introduce the following notation for abbreviation
   $$\trp (\alpha):=\alpha-\frac{\mathrm{tr}_g\alpha}{n}g,$$
  which we refer to as the \emph{\textbf{conformal linear operator}} or \emph{\textbf{traceless linear operator}}. Notice that if $\alpha\in\Gamma(\mathrm{Sym}^{\otimes 2}(T^*M))$, then $\tr_g^\perp (\alpha)\in\Gamma(\mathrm{Sym}^{\otimes 2}(T^*M))$. The use of  perpendicular $\perp$ is justified, as it satisfies $\langle \alpha-\frac{\tr_g \alpha}{n}g,\, \frac{\tr_g \alpha}{n}g \rangle =0$, where we employ the inner product of 2-tensors induced by $g$. 
\end{rema}

\begin{prop}\label{part 1 of mt}
For any $l\geq 2$,  $\eta_i\in C^{\infty}(M,g)$, $1\leq i\leq l-1$, there are $h_i\in \Gamma(\mathrm{Sym}^{\otimes2}(T^*M))$ uniquely determined by $\eta_i$ satisfying $\frac{1}{n}\tr_g h_i=\eta_i$,  such that for the family of metrics 
\begin{equation*}
    g(s)=g+\sum\limits_{i=1}^{l-1}s^ih_i,
\end{equation*}
the induced metric from the heat kernel embeddings with $\Psi_{t,g(s)}: (M,g(s))\rightarrow \ell^2$ satisfies the estimate
\begin{equation*}
    ||\Psi^*_{t,g(t)}g_{\mathrm{can}}-\frac{\tr_g\Psi^*_{t,g(t)}g_{\mathrm{can}}}{n}g||_{C^r(M,g)}\leq C(g,l,r) t^l,
\end{equation*}
for any $r\geq 0$, where the constant $C(g,l,r)$ depends only on $l,r$ and the geometry of $(M,g)$.
\end{prop}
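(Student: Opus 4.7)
The plan is to apply the BBG expansion to the heat kernel embedding of the perturbed metric $g(s) = g + \sum_{i=1}^{l-1} s^i h_i$ with $s$ an auxiliary parameter, then specialize $s = t$ and match the conformal condition order by order in $t$, using the $\eta_i$'s to prescribe the traces of the $h_i$'s.

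For each small $s$, $g(s)$ is a smooth Riemannian metric (being a $C^\infty$-small perturbation of $g$), so the BBG theorem applied to $(M, g(s))$ gives
\begin{equation*}
\Psi^*_{t,g(s)} g_{\mathrm{can}} \;=\; g(s) + \sum_{j=1}^{l-1} t^j A_j(g(s)) + O(t^l)
\end{equation*}
in the $C^r$ topology, uniformly in $s$. Setting $s = t$ and Taylor-expanding each $A_j(g(t))$ about $g$ (legitimate because each $A_j$ is a universal polynomial in the components and covariant derivatives of the metric up to order $2j$, and $g(t) - g = O(t)$), I collect powers of $t$ to obtain
\begin{equation*}
\Psi^*_{t,g(t)} g_{\mathrm{can}} - g \;=\; \sum_{i=1}^{l-1} t^i E_i + O(t^l), \qquad E_i = h_i + F_i(g;\, h_1, \ldots, h_{i-1}),
\end{equation*}
where $F_i$ is a universal polynomial expression in $g$, its curvatures and covariant derivatives, and the earlier $h_1,\ldots,h_{i-1}$. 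Crucially, $F_i$ does \emph{not} involve $h_i$ itself.

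Since $\tr_g^\perp g = 0$, the traceless part of the full conformal defect vanishes through order $t^{l-1}$ exactly when $\tr_g^\perp E_i = 0$ for every $i = 1,\ldots,l-1$, i.e.
\begin{equation*}
h_i - \frac{\tr_g h_i}{n}\, g \;=\; -\Big( F_i - \frac{\tr_g F_i}{n}\, g \Big).
\end{equation*}
Combined with the prescribed trace $\tr_g h_i = n\eta_i$, this yields the explicit inductive formula
\begin{equation*}
h_i \;=\; \eta_i \, g \;-\; F_i \;+\; \frac{\tr_g F_i}{n}\, g,
\end{equation*}
and one checks directly that indeed $\tr_g h_i = n\eta_i$ and $E_i = (\eta_i + \tfrac{1}{n}\tr_g F_i)\,g$, so that $\tr_g^\perp E_i = 0$. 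Because $F_i$ depends only on $g$ and on $h_1,\ldots,h_{i-1}$, this fixes the sequence $h_1, h_2, \ldots, h_{l-1}$ uniquely and smoothly by induction on $i$ (starting from $F_1 = A_1(g)$), with each $h_i$ determined by $\eta_i$ and the already-determined predecessors.

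Applying $\tr_g^\perp$ to the collected expansion and invoking the $C^r$-uniformity of the BBG remainder then gives the claimed bound
\begin{equation*}
\Big\| \Psi^*_{t,g(t)} g_{\mathrm{can}} - \frac{\tr_g \Psi^*_{t,g(t)} g_{\mathrm{can}}}{n}\, g \Big\|_{C^r(M,g)} \;\leq\; C(g,l,r)\, t^l.
\end{equation*}
The main technical point I expect to need care on is the $C^r$-uniformity of the BBG remainder across the smooth one-parameter family $g(s)$ (equivalently, smooth dependence of the spectral data on the metric), which should follow from inspection of BBG's parametrix construction but must be verified. Beyond that, the argument reduces to order-by-order linear algebra on $\Gamma(\mathrm{Sym}^{\otimes 2}T^*M)$: once the trace is prescribed by $\eta_i$, the traceless equation fixes the remaining components of $h_i$ uniquely.
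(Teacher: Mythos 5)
Your proposal is correct and follows essentially the same route as the paper: expand $\Psi^*_{t,g(t)}g_{\mathrm{can}}$ via BBG applied to $g(s)$ with $s=t$, collect powers of $t$ so the coefficient of $t^i$ is $h_i$ plus a term depending only on $g$ and $h_1,\dots,h_{i-1}$, and solve the traceless equation order by order, with the one-dimensional kernel of $\tr_g^\perp$ (spanned by $g$) absorbed by the prescribed trace $\eta_i$. Your explicit formula $h_i=\eta_i g - F_i + \frac{\tr_g F_i}{n}g$ matches the paper's inductive solution, and your flagged concern about $C^r$-uniformity of the remainder over the compact family $g(s)$ is exactly what the paper handles by citing the uniform estimate for compact families of metrics.
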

\begin{proof}
This proposition is, as we mentioned, a conformal version of Proposition 5 in \cite{WZ},  with more attention to the trace part. We start by assuming the family of metrics $g(s)$ can be expressed as: $$g(s)=g+\sum_{i=1}^{l-1}h_i s^i \text{ with } h_i\in \Gamma(\mathrm{Sym}^{\otimes 2}(T^*M)).$$ Our objective is to determine the proper $h_i$'s. Let $G(s,t):=\Psi_{t,g(s)}^*g_{\rm can}=g(s)+tA_1(g(s))+t^2A_2(g(s))+\cdots$, where the $A_i$'s are given as in Theorem \ref{thm-mainthm of BBG}. Then after letting $s=t\rightarrow 0$, and define $A_{i,j}(h_1,\cdots,h_j):=\frac{\partial^j}{\partial s^j}\biggr\rvert_{s=0}\frac{1}{j!}A_i(g(s))$, we have

\begin{align}
     \Psi_{t,g(t)}^*&g_{\mathrm{can}}-\dfrac{\mathrm{tr}_g\Psi_{t,g(t)}^*g_\mathrm{can}}{n}\cdot g
     =G(s,t)-\dfrac{\mathrm{tr}_gG(s,t)}{n}\cdot g\Big|_{s=t}\label{prop (2.4)}\\
     =&\trp(g)+t(\trp(h_1))+t^2(\trp(h_2))+\cdots\notag\\
     &+t(\trp(A_1(g))) + t^2(\trp(A_{1,1}(h_1))) +t^3(\trp(A_{1,2}(h_2))) +\cdots        \notag\\
     &+t^2(\trp(A_2(g))) + t^3(\trp(A_{2,1}(h_1))) +t^4(\trp(A_{2,2}(h_2))) +\cdots +O(t^l),\notag
\end{align}
where surely $\trp(g)=0$. Then we need to find proper $h_i$ such that for $1\leq \tilde{l}\leq l-1$, all the terms of $t^{\tilde{l}}$ in \eqref{prop (2.4)} vanish: 
\begin{equation}\label{system of solving h_i}
\begin{aligned}
\trp (h_1)&=-\trp(A_1(g)),\\
\trp(h_2)&=-\trp(A_2(g))-\trp(A_{1,1}(h_1)),\\
  \cdots \cdots&=\cdots \cdots.
\end{aligned}
\end{equation}
Here, the $\tilde{l}$-th equation depends on $h_1,\cdots, h_{\tilde{l}-1}$; thus, the $h_i$ will be found inductively.\par
First, we shall study the first equation of \eqref{system of solving h_i}:
\begin{equation}\label{first equation to find freedom}
   \tr_g^\perp(h_1)=-\tr_g^\perp(A_1(g)).
\end{equation}
In fact, explicit  expression of all the solutions of $h_1$ can be obtained. Given the geometric meaning of $\tr_g^\perp$ as taking the traceless part of a symmetric 2-tensor, we point out that at each point $x\in M$, the kernel $\mathrm{Ker}((\trp)_x)\subset \mathrm{Sym}^{\otimes 2}T^*_x M$ as a vector subspace  is of 1 dimension, which corresponds to the trace part of a 2-tensor, and such 1 dimension is generated by $g_x$. The method we use to  find the solution reflects the discussion we have had on recovering the trace in the Intrduction.

\par 
The above arguments show that locally $\mathrm{Ker}(\tr_g^\perp)$ is generated by $g$. Given that $h_1 = -A_1 (g)$ is one of the solutions, we can  express all  $h_1$ satisfying \eqref{first equation to find freedom} as follows:

\begin{equation}\label{h1}
    h_1=-A_1(g)+\frac{\tr_gA_1(g)}{n}g+\eta_1\cdot g.
\end{equation}
Here $\eta_1\in C^\infty(M,\mathbb{R})$ is  a globally smooth function. The expression of $h_1$ in (\ref{h1}) as a solution implies that  $\eta_1=\frac{\tr_gh_1}{n}.$\par
Next, for each $h_1$ in the form of \eqref{h1}, after fixing one $\eta_1\in C^\infty(M,g)$,  the equation   $h_2-\dfrac{\mathrm{tr}_g    h_2      }{n}\cdot g=-A_2(g)+\dfrac{\mathrm{tr}_g A_2(g)}{n}\cdot g-A_{1,1}(h_1)+\dfrac{\mathrm{tr}_g A_{1,1}(h_1)}{n}\cdot g$ can be solved for $h_2$. As in the $h_1$ case, all the $h_2$ have to be in the following form:
\begin{equation*}
    h_2=-A_2(g)-A_{1,1}(h_1)+\frac{\tr_g (A_2(g)+A_{1,1}(h_1))}{n}g+\eta_2\cdot g,\quad \eta_2\in C^\infty (M,\mathbb{R}),
\end{equation*}
where the $\eta_2$ satisfies    $\eta_2=\frac{\tr_gh_2}{n}.$

\par
Now we can have an explicit expression of $h_i$, $1\leq i\leq l-1$ inductively. Thus the way of $g(t)=g+\sum_{i=1}^{l-1}h_i t^i$ approaches to $g$ is determined but for the trace about $\eta_i$ to be given. Then, $\Psi_{t,g(t)}$ will satisfy:
\begin{equation}
    \begin{aligned}
    (\Psi_{t,g(t)})^*g_\mathrm{can}=&g+t(\frac{1}{n}\tr_g A_1(g)+\eta_1) g+t^2(\frac{1}{n}\tr_g (A_2(g)+A_{1,1}(h_1))+\eta_2) g\\
    &+\cdots+t^{l-1}(\frac{1}{n}\tr_g\sum_{i+j} A_{i,j}(h_1,\cdots,h_j)+\eta_{l-1})g+O(t^l)     
    \end{aligned}
\end{equation}
in the $C^r$ sense for any $r\geq 0$, thus concludes the proof.
\end{proof}

\begin{defi}\emph{(Canonical almost conformal embedding).}\label{defi-canonical almost conformal embedding} Given $\eta_i\in C^\infty(M,g)$, we call the $\Psi_{t,g(t),\eta_i}: M \rightarrow \ell^2$ constructed above the  \emph{ \textbf{(modified)  conformal heat kernel embedding}}.

\end{defi}
Next,  to obtain the embedding into $\mathbb{R}^q$, we truncate off the terms beyond the first $q$ ones, in the following sense.
 \begin{defi}\emph{(Truncated embedding)}\label{defi of truncation}
Let 
\begin{equation*}
    \Pi_q :\ell^2 \longrightarrow \mathbb{R}^q
\end{equation*}be the projection of $\ell^2$ to the first $q$ components. To get a finite-dimensional almost conformal embedding, we introduce the \textbf{\emph{truncated embedding}}
\begin{equation*}
    \Psi_{t,\eta_i}^{q(t)} :=\Pi_q\circ\Psi_{t,g(t),\eta_i}: (M,g)\longrightarrow\ell^2\overset{\Pi_{q(t)}}{\longrightarrow} \mathbb{R}^{q(t)}.
\end{equation*}

\end{defi}

The following Proposition estimates  the truncated tail approaches to $0$ exponentially, which is due to \cite[Proposition 9]{WZ}.
\begin{prop}[\protect{\cite[Proposition 9]{WZ}}]
\label{weyl law}
Consider a compact family $\{g_s\}_{s\in \Lambda}$ of smooth metrics defined on a compact $n$-dimensional Riemannian manifold $M$, where $g_s$ smoothly depends on the parameter $s$, and $\Lambda$ denotes this compact family of metrics. For any point $x$ in $M$, let $\{x^k\}_{1\leq k\leq n}$ represent the normal coordinates with respect to the metric $g_s$. Then, for any multi-indices $\Vec{\gamma}$ and $\vec{\beta}$, and any $q(t)$ satisfying $q(t)\geq t^{-(\frac{n}{2}+\rho)}$,

\begin{equation}\label{inequality of Weyl law}
\sum_{j\geq q(t)+1} e^{-\lambda_jt}D^{\Vec{\gamma}}\phi_jD^{\vec{\beta}}\phi_j \leq C e^{(-t^{-\frac{\rho}{n}})}.
\end{equation}
 The convergence is uniform across all points $x\in M$ and all metrics $s\in \Lambda$ in the $C^r$-norm for any $r\geq 0.$
\end{prop}

\begin{rema}
    Notice that Proposition \ref{weyl law} requires the family of metrics $\{g_s\}_{s\in \Lambda}$ to be compact. On the other hand, the family of metrics $g(t)$ in Proposition \ref{part 1 of mt} is given as 
    $$g(t)=g+\sum_i^{l-1} h_i t^i, \text{ with } \frac{\tr_g h_i}{n}=\eta_i,$$
    and each $h_i$ is determined by $h_1,\cdots,h_{i-1}$ inductively. To ensure that the  family of $g(t)$ is compact, and thus apply Proposition \ref{weyl law}, we need to either fix $\eta_i$ \emph{a priori} or to let $\eta_i$ vary in a compact family, and let $t$ in $[0, t_0]$ for some $t_0>0.$ 
\end{rema}

\begin{rema} As long as $\rho >0$, the right-hand side of \eqref{inequality of Weyl law} approaches to $0$ exponentially, and the inequality holds. For simplicity, we would take $\rho=1$ in the application of this paper, since finding the optimal $q(t)$ is not our goal.
\end{rema}
The following corollary applies the former discussion to our conformal case.
\begin{coro}\label{coro 2.10}
Given any $l\geq 2$, $\eta_i\in C^\infty (M,\mathbb{R})$  as in Proposition \ref{part 1 of mt}, for $q(t)\geq t^{-(\frac{n}{2}+1)}$, the truncated modified heat kernel embedding $\Psi_{t,\eta_i}^{q(t)}: (M,g) \rightarrow \mathbb{R}^{q(t)}$ still satisfies the asymptotic formula
\begin{equation}\label{error term Psi_^q(t)}
(\Psi_{t,\eta_i}^{q(t)})^*g_{\mathrm{can}} =\frac{\mathrm{tr}_g (\Psi_{t,\eta_i}^{q(t)})^*g_\mathrm{can}}{n}\cdot g +O(t^l)    
\end{equation}
in the $C^r$ sense for any $r\geq 0$.
\end{coro}
\begin{proof}
One can easily use the  estimate  in Proposition \ref{weyl law} to prove this, noting that the  $\Psi_{t,\eta_i}^{q(t)}$ corresponds to some metric $g(t)$ as in Proposition \ref{part 1 of mt} with $\eta_i$ given a priori, and the fact $\exp(-t^{-\frac{1}{n}})<t^l$ for any $l\geq 2$ as $t\rightarrow 0^+$.
\end{proof}
The truncated almost conformal embedding $\Psi^{q(t)}_{t,\eta_i}$ will be later perturbed to a family of honest conformal embeddings in Section 6. 

\section{G\"unther's iteration and the modification to conformal case} 
We may start by stating some conventions. Assume $u=(u_1,\cdots,u_N)\in C^\infty (M, \mathbb{R}^N)$ is a smooth embedding, and let the metric $g \in C^{2,\alpha}(M,\mathrm{Sym}^{ \otimes 2}T^*M)$. Then $\nabla u=(\nabla u_1,\cdots,\nabla u_N)\in C^\infty (M,T^*M\otimes \mathbb{R}^N)$ as a section, and we further denote $\nabla u\cdot \nabla u:=\nabla u_1\otimes \nabla u_1+\cdots +\nabla u_N\otimes \nabla u_N\in C^\infty (M,T^*M\otimes T^*M )$ as a $(0,2)$-tensor field. In local coordinates, $\nabla u$ may be identified with an $n\times N$ matrix, where $n=\dim M$, and $\nabla u\cdot \nabla u$ can be computed as $\nabla u \nabla u^T$. Eventually, as we will see in the later sections, $u$ is meant to represent $\Psi_t$ and $\Psi_{t,\eta_i}^{q(t)}$. 

\par 
Let's begin with the assumption that $u$  is given readily as an almost  conformal map, which satisfies
\begin{equation}\label{almost conformal equation}
    \nabla u\cdot \nabla u-\frac{\tr_g(\nabla u\cdot \nabla u)}{n}g=-f+\frac{\tr_g f}{n}g,
\end{equation} where $f$ is a `small' symmetric 2-tensor, and $\nabla$ is the Levi-Civita connection of $(M,g)$.  To attain our goal of this paper, it will suffice to find a map $u+v :M \longrightarrow \mathbb{R}^N$ that solves the equation
\begin{equation}\label{conformal want}
    \nabla(u+v)\cdot \nabla(u+v)-\frac{\tr_g(\nabla(u+v)\cdot \nabla(u+v))}{n}g=0.
\end{equation} 


Then, after subtract \eqref{conformal want} by \eqref{almost conformal equation},  our goal becomes finding a $v\in C^{r,\alpha}(M,\mathbb{R}^N)$, $r\geq 2$, satisfying the \textbf{conformal embedding equation}:
\begin{equation}\label{conformal embedding equation}
    \nabla u\cdot \nabla v-\frac{\mathrm{tr}_g(\nabla u\cdot \nabla v)}{n}g+\nabla v\cdot \nabla u-\frac{\mathrm{tr}_g(\nabla v\cdot \nabla u)}{n}g+\nabla v\cdot \nabla v-\frac{\mathrm{tr}_g(\nabla v\cdot \nabla v)}{n}g=f-\frac{\tr_g f}{n}g.
\end{equation}

\subsection{Free mappings.}  In this subsection, we would like to state the facts about \textbf{free mapping} and apply it to our conformal case. Throughout this paper, when we mention $C^{r,\alpha}$, $r\geq 2$, we will fix $0<\alpha<1$.  Also, since finding the optimistic $N$ is not the goal of this paper,  we could take $N$ to always be greater than or equal to $n+\frac{1}{2}n(n+1)$. 
\begin{defi}\label{def of P and P_c}
A $C^\infty$ embedding $u:M \longrightarrow \mathbb{R}^N$ is \emph{\textbf{free}} if, for each $x\in M$, the $n+\frac{1}{2}n(n+1)$ many vectors in $\mathbb{R}^N$:
$$\partial_iu(x), \partial_i\partial_ju(x), 1\leq i,j \leq n$$
form a $\mathrm{min}(N,n+\frac{1}{2}n(n+1))$-dimensional linear subspace of $\mathbb{R}^N$. Note that this definition is independent of the choice of coordinates. Denote such a subspace as $\mathrm{Span}\{\partial_iu(x), \partial_i\partial_ju(x)\}$. In this paper, we would also always denote a global linear operator  $\boldsymbol{P(u)}$ as follows:
\begin{equation*}
    \boldsymbol{P(u)}:=\begin{bmatrix}
      \nabla u\\
      \nabla\nabla u
    \end{bmatrix},
\end{equation*}
and another global operator $\boldsymbol{P_{c}(u)}$ that will be useful for conformal case:
\begin{equation*}
    \boldsymbol{P_c(u)}:=\begin{bmatrix}
      \nabla u\\
      \nabla\nabla u-\frac{\tr_g \nabla\nabla u}{n}g
    \end{bmatrix}.
\end{equation*}
\end{defi}
To clarify the definition, note that although for any point $x\in M$, $P$ could be thought of as a map $C^{\infty}(M)\longrightarrow T^*_x M \oplus\mathrm{Sym}^{\otimes 2} T_x^*M$, in our discussion, $P$ is always applied to a fixed free mapping $u$, which makes it a linear operator. The definition of $P(u)$ can be viewed as follows:
\begin{equation*}
    \begin{array}{ccccc}
         C^{\infty}(M,\mathbb{R}^N)&\overset{P|_x}{\xrightarrow{\hspace*{1cm}}} &\mathbb{R}^N\otimes(T^*_x M \oplus\mathrm{Sym}^{\otimes 2} T_x^*M) & \overset{\text{in normal coordinates}}{\xrightarrow{\hspace*{2.5cm}}}& \mathcal{L}(\mathbb{R}^N,\mathbb{R}^{n+\frac{n(n+1)}{2}})\\
         & & & &\\
         u& \xmapsto{\hspace*{0.6cm}} & \begin{bmatrix}
           \nabla u\\
           \nabla \nabla u
         \end{bmatrix}& \xrightarrow{\hspace*{1cm}}&  \begin{bmatrix}
           \nabla_i u_m\\
           \nabla_j \nabla_k u_m
         \end{bmatrix}_{\begin{array}{c}
             \scriptstyle 1\leq i,j,k \leq n, \\
              \scriptstyle 1\leq m\leq N
         \end{array}}
    \end{array}.
\end{equation*}

\begin{nota}\label{Notaion: large matrix}\normalfont
    In this paper, we would frequently encounter matrices or vectors with $n+\frac{n(n+1)}{2}$ rows, where each column represents an $n$-length vector (usually a gradient $\nabla$) and an $n\times n$ matrix (usually a hessian $\nabla \nabla$).  Our convention is as follows: we first place the $n$-length vector in the first $n$ rows; for the remaining $\frac{n(n+1)}{2}$ rows, we arrange the off diagonal entries of the $n\times n$ symmetric matrix in the first $\frac{n(n-1)}{2}$ rows, followed by the diagonal entries in the final $n$ rows.   \par
    For example, consider a free mapping $u=(u_1,\cdots , u_N)\in C^\infty (M,\mathbb{R}^N)$ in the normal coordinates  $\{x^i\}_{1\le i \le n}$ centered at $x\in M$. Then $P(u)(x)$ can be denoted as an $(n+\frac{n(n+1)}{2})\times N$ matrix that has a rank of $n+\frac{n(n+1)}{2}$, if $N\geq n+\frac{n(n+1)}{2}$:

\begin{equation*}
    P(u)(x)=\begin{bmatrix*}[l]
      \nabla_iu_1&  \nabla_iu_2  &\dots&\nabla_i u_N \,   \\
      \nabla_i\nabla_ju_1 & \nabla_i\nabla_ju_2 & \dots&\nabla_i\nabla_ju_N\,\\
      \nabla_k\nabla_ku_1 &\nabla_k\nabla_ku_2 & \dots&  \nabla_k\nabla_ku_N\,\\
    \end{bmatrix*}(x).
\end{equation*}
   $P_c(u)(x)$ has a similar matrix expression to $P(u)(x)$,differing only in the last $n$ rows, where each column is adjusted by subtracting $\frac{1}{n} \sum_{p=1}^n\nabla_p\nabla_p u_i$. \par

Here and in most discussions in this paper, it is enough to clarify the argument pointwisely, which allows us to pick the normal coordinates such that the Christoffel symbol vanishes at the point $x$.
\end{nota}
\begin{lemm}\label{Free mapping's linear algebra lemma} Let $r\geq 2$, for a free embedding $u\in C^\infty (M,\mathbb{R}^N)$,  and for $\xi\in C^{r,\alpha}(M,T^* M)$, $f\in C^{r,\alpha}(M,\mathrm{Sym}^{\otimes 2} T^*M)$, there exists a unique $v\in C^{r,\alpha}(M,\mathbb{R}^N)$ such that
\begin{equation}\label{2.5}
    P(u)\cdot v=\begin{bmatrix}
      \nabla u\\
      \nabla\nabla u   
    \end{bmatrix}v=\begin{bmatrix}
      \xi\\
      f
    \end{bmatrix}, \text{ and } v(x)\perp \mathrm{Ker}P(u)(x).
\end{equation}

\end{lemm}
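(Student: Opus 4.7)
The plan is to solve the equation $P(u) \cdot v = [f, h]^T$ pointwise by linear algebra and then propagate $C^{s,\alpha}$ regularity from the right-hand side through the smooth $x$-dependence of the coefficients. No PDE iteration is needed, since the constraint on $v$ is fiberwise linear (algebraic), not differential.

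First I would fix a point $x \in M$ and work in normal coordinates centered at $x$. Because $u$ is free and $N \geq \frac{n(n+3)}{2}$, the matrix $P(u)(x)$ has full row rank $\frac{n(n+3)}{2}$. Hence $P(u)(x) \colon \mathbb{R}^N \to \mathbb{R}^{n(n+3)/2}$ is surjective, and the preimage of $[f(x), h(x)]^T$ is a nonempty affine subspace parallel to $\mathrm{Ker}\, P(u)(x)$. The orthogonality condition $v(x) \perp \mathrm{Ker}\, P(u)(x)$ forces $v(x)$ to lie in $(\mathrm{Ker}\, P(u)(x))^{\perp}$, a subspace of dimension $\frac{n(n+3)}{2}$ on which $P(u)(x)$ restricts to a linear isomorphism onto $\mathbb{R}^{n(n+3)/2}$. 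This yields pointwise existence and uniqueness.

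Second I would exhibit the solution through the Moore--Penrose-type right inverse, namely
\[
v(x) \;=\; P(u)(x)^{T} \bigl(P(u)(x)\, P(u)(x)^{T}\bigr)^{-1} \begin{bmatrix} f(x) \\ h(x) \end{bmatrix}.
\]
The symmetric matrix $P(u)(x) P(u)(x)^{T}$ of size $\tfrac{n(n+3)}{2} \times \tfrac{n(n+3)}{2}$ is positive definite at every point, because $P(u)(x)$ has full row rank; by Cramer's rule and compactness of $M$ its inverse is $C^{\infty}$ and uniformly bounded on $M$. Although the formula is written in a chosen frame, the resulting vector $v(x) \in \mathbb{R}^N$ is coordinate-invariant, since both $\mathrm{Ker}\, P(u)(x)$ and its orthogonal complement are intrinsic to the linear map $P(u)(x)$.

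Third, for the regularity conclusion, I would note that the solution map $(f, h) \mapsto v$ is realized by pointwise contraction of the $C^{s,\alpha}$ data against the smooth tensor $P(u)^{T}\bigl(P(u)P(u)^{T}\bigr)^{-1}$. Pointwise multiplication by a $C^{\infty}$ tensor is a bounded linear operator on $C^{s,\alpha}$ spaces, so $v \in C^{s,\alpha}(M, \mathbb{R}^N)$ with an estimate
\[
\|v\|_{C^{s,\alpha}} \;\leq\; C(u,g)\bigl(\|f\|_{C^{s,\alpha}} + \|h\|_{C^{s,\alpha}}\bigr).
\]
The only point that requires care is global consistency, namely that the pointwise orthogonal projections patch into a smooth section globally; this is automatic from the constant-rank hypothesis (which is precisely the \emph{free mapping} condition) together with smoothness of $u$. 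So there is no genuine obstacle beyond bookkeeping: the lemma is pointwise linear algebra dressed up as a functional statement.
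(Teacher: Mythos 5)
Your proposal is correct and follows essentially the same route as the paper: pointwise surjectivity of $P(u)(x)$ from freeness, uniqueness after restricting to $(\mathrm{Ker}\,P(u)(x))^{\perp}$, and globalization via smooth dependence on $x$. Your explicit Moore--Penrose formula $v = P^T(PP^T)^{-1}[f,h]^T$ is a welcome addition that the paper itself only introduces later (as the right inverse $E(\Psi_t)$ in Section 3), and it makes the $C^{s,\alpha}$ regularity claim more transparent than the paper's one-line appeal to differentiability.
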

\begin{proof} By the definition of $u$ being a free mapping of $u$, for each point $x\in M$, the $P(u)(x)$ is of full rank, hence $P(u)(x): \mathbb{R}^N \longrightarrow T^*_x M\oplus \mathrm{Sym}^{\otimes 2}T^*_xM$ is surjective, therefore the solution $v\in \mathbb{R}^N$ exists. After forcing $v(x)\perp \mathrm{Ker}P(u)(x)$, such $v(x)$ is unique.\par
 Notice that the pointwise solution for $v$ will yield the globally defined $v\in C^{r,\alpha}(M,\mathbb{R}^N)$ since  $P(u)$, $\xi$, $f$ are globally defined and  $\xi$ and $f$ are of $C^{r,\alpha}$.

\end{proof}

\begin{prop}\label{P_c ker}
Let $r\geq 2$, for a free embedding $u\in C^\infty (M,\mathbb{R}^N)$,  and for $\xi\in C^{r,\alpha}(M,T^* M)$, $f\in C^{r,\alpha}(M,\mathrm{Sym}^{\otimes 2} T^*M)$, there exists a unique $v\in C^{r,\alpha}(M,\mathbb{R}^N)$ satisfying the following equation:
\begin{equation}\label{2.6}
    P_c(u)\cdot v=\begin{bmatrix}
      \nabla u\\
      \nabla\nabla u-\frac{\tr_g(\nabla\nabla u)}{n}g
    \end{bmatrix}v=\begin{bmatrix}
      \xi\\
      f-\frac{\tr_g f}{n}g
    \end{bmatrix}, \text{ and } v(x)\perp \mathrm{Ker}P_c(u)(x).
\end{equation}
Moreover on each point $x\in M$, we have  $\mathrm{rank}(P_c(u)(x)) =\dim(\mathrm{Im} P(u)(x))=n+\frac{n(n+1)}{2}-1$,  and $\dim(\mathrm{Ker}P_c(u)(x))=\dim(\mathrm{Ker}P(u)(x))+1$.
\end{prop}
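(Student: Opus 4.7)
The plan is to factor $P_c(u)$ through a fiberwise trace-correction operator and reduce everything to the linear algebra already carried out for $P(u)$ in Lemma \ref{Free mapping's linear algebra lemma}. First, at each $x \in M$, I would introduce the linear endomorphism
$$T_x : T^*_x M \oplus \mathrm{Sym}^{\otimes 2} T^*_x M \longrightarrow T^*_x M \oplus \mathrm{Sym}^{\otimes 2} T^*_x M,\qquad T_x(f, h) := \bigl(f,\, h - \tfrac{\tr_g h}{n}\, g\bigr),$$
so that tautologically $P_c(u)(x) = T_x \circ P(u)(x)$. By Remark \ref{denotion of C}, the conformal operator $\tr_g^\perp$ on $\mathrm{Sym}^{\otimes 2} T^*_x M$ has $1$-dimensional kernel spanned by $g(x)$, so $\mathrm{Ker}\, T_x = \{(0, \lambda g(x)) : \lambda \in \RR\}$ is $1$-dimensional, and $\mathrm{Im}\, T_x = T^*_x M \oplus \{\text{traceless symmetric } 2\text{-tensors}\}$ has dimension $n + \tfrac{n(n+1)}{2} - 1$.

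Next, since $u$ is free, $P(u)(x)$ is surjective onto $T^*_x M \oplus \mathrm{Sym}^{\otimes 2} T^*_x M$ by the reasoning behind Lemma \ref{Free mapping's linear algebra lemma}. Applying $T_x$ yields $\mathrm{Im}\, P_c(u)(x) = T_x(\mathrm{Im}\, P(u)(x)) = \mathrm{Im}\, T_x$, giving $\mathrm{rank}(P_c(u)(x)) = n + \tfrac{n(n+1)}{2} - 1$. For the kernel, the factorization combined with surjectivity of $P(u)(x)$ gives $\mathrm{Ker}\, P_c(u)(x) = P(u)(x)^{-1}(\mathrm{Ker}\, T_x)$, whose dimension is $\dim \mathrm{Ker}\, P(u)(x) + \dim \mathrm{Ker}\, T_x = \dim \mathrm{Ker}\, P(u)(x) + 1$ by rank--nullity applied to the restriction of $P(u)(x)$ to this preimage.

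For existence of $v'$ solving \eqref{2.6}, the key observation is that the prescribed right-hand side $(f,\, h - \tfrac{\tr_g h}{n} g)$ has traceless second component (since $\tr_g g = n$), so it lies in $\mathrm{Im}\, T_x = \mathrm{Im}\, P_c(u)(x)$ at every point. The orthogonality constraint $v'(x) \perp \mathrm{Ker}\, P_c(u)(x)$ then pins down a unique minimum-norm preimage at each $x$. For global regularity, since $\mathrm{Ker}\, P_c(u)(x)$ has constant dimension and $P_c(u)$ is $C^\infty$ in $x$ (as $u$ is), the orthogonal projection onto $(\mathrm{Ker}\, P_c(u)(x))^\perp$ depends smoothly on $x$, so $v' \in C^{s,\alpha}(M, \RR^N)$ follows from the $C^{s,\alpha}$ regularity of $f$ and $h$, exactly as in Lemma \ref{Free mapping's linear algebra lemma}.

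The only real subtlety is the rank drop by one: $P_c(u)$ fails to be surjective onto $T^*M \oplus \mathrm{Sym}^{\otimes 2} T^*M$ by exactly one dimension, and one must verify that the prescribed data automatically lies in the image. This is precisely the role of applying $\tr_g^\perp$ — simultaneously on the operator $P_c(u)$ (yielding one extra kernel direction, which matches the \emph{recovered trace} discussed in the introduction) and on the right-hand side (fitting inside the codimension-one image). Once the factorization $P_c(u) = T \circ P(u)$ is in place, the rest of the proof is essentially the same pointwise-then-global bookkeeping as in Lemma \ref{Free mapping's linear algebra lemma}.
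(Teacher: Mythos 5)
Your proof is correct, and it packages the argument differently from the paper. You factor $P_c(u)(x)=T_x\circ P(u)(x)$ through the fiberwise trace-removal map $T_x$ and then read off everything from rank--nullity: the rank drop, the extra kernel dimension as $P(u)(x)^{-1}(\mathrm{Ker}\,T_x)$, and existence from the observation that the prescribed right-hand side is traceless and hence lies in $\mathrm{Im}\,T_x=\mathrm{Im}\,P_c(u)(x)$. The paper instead (i) produces a particular solution by noting that the unique $v_0$ solving $P(u)\cdot v_0=[f,\ h]^T$ from Lemma \ref{Free mapping's linear algebra lemma} automatically solves the $P_c$ equation, and then describes the full solution set as $v_0+\mathrm{Ker}\,P_c(u)(x)$; and (ii) computes the rank by an explicit normal-coordinate calculation, exhibiting the linear relation $\nabla_i\nabla_iu-\tfrac{1}{n}\sum_p\nabla_p\nabla_pu=-\sum_{k\neq i}(\nabla_k\nabla_ku-\tfrac{1}{n}\sum_p\nabla_p\nabla_pu)$ among the rows together with a direct-sum lower bound. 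Your factorization is cleaner and makes the dimension counts immediate, and it is essentially the abstract form of the identity $P_c(\Psi_t)=\bigl(I-\tfrac{1}{n}\left[\begin{smallmatrix}0&0\\0&J_n\end{smallmatrix}\right]\bigr)P(\Psi_t)$ that the paper only writes down later, in the proof of Theorem \ref{construction of E_c}. What the paper's route buys is that the particular solution $v_0$ and the generator of $\mathrm{Ker}\,P_c/\mathrm{Ker}\,P$ are constructed explicitly via $P(u)$, which is exactly the data reused in Remark \ref{essenttial remark} and in the construction of the family of right inverses $E_c$; your argument establishes the proposition but would need that explicit description supplied afterwards.
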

\begin{proof}
We will first show the existence and uniqueness of the solution at a fixed point $x\in M$. First notice that,  given any $\xi$ and $f$, the unique  $v_0$ such that
\begin{equation}\label{2.7}
  P(u)\cdot v_0=\begin{bmatrix}
      \nabla u\\
      \nabla\nabla u   
    \end{bmatrix}v_0=\begin{bmatrix}
      \xi\\
      f
    \end{bmatrix}, \text{ and } v_0(x)\perp \mathrm{Ker}P(u)(x)    
\end{equation}
also satisfies
\begin{equation*}
     P_c(u)\cdot v_0=\begin{bmatrix}
      \nabla u\\
      \nabla\nabla u-\frac{\tr_g(\nabla\nabla u)}{n}g
    \end{bmatrix}v_0=\begin{bmatrix}
      \xi\\
      f-\frac{\tr_g f}{n}g
    \end{bmatrix}.
\end{equation*} 
Indeed, this holds due to the linearity of  $\tr_g$:
\begin{equation*}
\langle \frac{\tr_g(\nabla\nabla u)}{n}g,v_0\rangle=\sum_{j=1}^N\frac{\tr_g(\nabla\nabla u_j\cdot (v_0)_j)}{n}g=\frac{\tr_g(\sum_{j=1}^N\nabla\nabla u_j\cdot (v_0)_j)}{n}g=\frac{\tr_g f}{n}g.
\end{equation*}\par
Hence for any  $v$ satisfies $P_c(u)\cdot v=[\xi, f-\frac{\tr_g f}{n}g]^T$, the  $v$ has to be in the form  $$v=v_0+w,$$ where $v_0$ is the unique vector attained by $\eqref{2.7}$, and $w$ is an arbitrary vector in $\mathrm{Ker}P_c(u)(x)$. Moreover by forcing  $v(x)=v_0+w\in \mathrm{Ker}^\perp P(u)(x)$,  we will get a unique $w\in \mathrm{Ker}P_c(u)(x)$. By the same reason as in former lemma, the global solution $v\in C^{r,\alpha}(M,\mathbb{R}^N)$ also exists and is unique.\par

\par 
Express $P_c(u)(x)$ in normal coordinates as an $\frac{n(n+3)}{2}\times N$ matrix as in Notation \ref{Notaion: large matrix}. Summing the last $n$ rows we have

 \begin{equation*}
 \sum_{i=1}^n\Big(\nabla_i\nabla_iu(x)-\frac{\sum_{p=1}^n(\nabla_p\nabla_p u(x))}{n}\Big)=0.
 \end{equation*}
Thus $\rank (P_c(u)(x))\leq \frac{n(n+3)}{2}-1$. Also notice that by direct summing with a one dimensional space, we have: $$\mathrm{Span} \{\nabla_iu(x),\nabla_i\nabla_ju(x)-\frac{\sum_{p=1}^n(\nabla_p\nabla_pu(x))}{n}\}\oplus\{\frac{\sum_{p=1}^n(\nabla_p\nabla_pu(x))}{n}\}\supseteq \mathrm{Span} \{\nabla_iu(x),\nabla_i\nabla_ju(x)\}.$$ 
Therefore  $\rank(P_c(u)(x))=\dim(\mathrm{Im}P_c(u)(x))\geq \frac{n(n+3)}{2}-1$, where the conclusion arrives.
\end{proof}

\begin{rema} \label{essenttial remark}
The former proposition states the decomposition that $$\mathrm{Ker}P_c(u)(x)=\mathrm{Ker}P(u)(x)\oplus \{w(x)\}$$ for each point $x\in M$ where $w\in \mathbb{R}^N$. Moreover, we can describe the generator $w(x)$ precisely  here. Let $w\in C^{r,\alpha}(M,\mathbb{R}^N)$ be the unique one such that
\begin{equation*}
    P(u)\cdot w=\begin{bmatrix}
      0\\
      g
    \end{bmatrix},\quad \text{and } w(x)\perp \mathrm{Ker}P(u)(x). 
\end{equation*}
By Lemma \ref{Free mapping's linear algebra lemma}, such  $w$ exists and is unique. Consequently $P_c(u)\cdot w=0$, i.e. $w\in \mathrm{Ker}P_c(u)(x)$. By the definition of $w$, especially that $w(x)\perp \mathrm{Ker}P(u)(x)$, we see this $w$ is exactly the one in the decomposition.\par

 Hence for all the $v\in C^{r,\alpha}(M,\mathbb{R}^N)$ that satisfies $v(x)\perp \mathrm{Ker}P(u)(x)$ and solves $P_c(u)\cdot v=[
      f \quad
      h-\frac{\tr_g h}{n}g
    ]^T$, it has to be in the form that 
    \begin{equation*}
        v= v_0+k\cdot w,
    \end{equation*}
    where $v_0$ is the unique solution of $P(u)\cdot v_0=[
      f \quad
      h
    ]^T$, $v_0(x)\perp \mathrm{Ker}P(u)(x)$, and $w$ is the unique vector defined above, $k\in C^{r,\alpha}(M,\mathbb{R})$.\par 
Even though it is not closely related to our later goal, it is worth noting that by further requiring $v(x)\perp \mathrm{Ker}P_c(u)(x) \supset \mathrm{Ker}P(u)(x)$, we will have a unique solution $v(x)$. In fact, we have 
\begin{equation*}
    v=v_0-\frac{\langle v_0, w \rangle_{\mathbb{R}^N}}{\langle w, w\rangle_{\mathbb{R}^N}}w.
\end{equation*} Notice that $\langle w, w\rangle_{\mathbb{R}^N}(x)\neq 0$ for any $x\in M$, so the expression  is a well defined global one. Also, notice that even though $\langle h-\frac{\tr_gh }{n}g, g\rangle =0$, the inner product $\langle v_0, w \rangle_{\mathbb{R}^N}$ isn't always equal to zero.
\end{rema}

\subsection{G\"unther's lemma in conformal case.}\label{Gunther}  In this subsection, we would follow G\"unther \cite{G1} to perform a detailed computation, summarizing the results as a lemma at the end. Again, the well-known Einstein summation notation is employed throughout.\par
Let $\nabla $ be the Levi-Civita connection of $(M,g)$. The Laplacian in use  is the connection Laplacian, $\Delta:=\mathrm{tr}\nabla^2$, applicable to all the functions and tensors of at least $C^2$ smooth. The Ricci curvature is defined as $R_{ik}:=R_{lik}{}^l$ in our convention. \par
In our setting, the eigenvalues are  the $\lambda_i$'s satisfying  $\Delta f +\lambda f=0$, hence $0<\lambda_1\leq \lambda_2\leq \cdots $. Thus for some positive constant number $\epsilon$, the $\Delta-\epsilon$ is an isomorphism between $C^{r,\alpha}$ and $C^{r-2,\alpha}$ of functions and tensors  of various sizes on compact $M$, for $r\geq 2$. Additionally, it has a unique inverse denoted as $(\Delta-\epsilon)^{-1}$. For our application, we shall take $\epsilon=1$ throughout.

Let $u\in C^\infty (M, \mathbb{R}^N)$ be a free embedding approximating the given smooth metric $g$ as in \eqref{almost conformal equation}. Let $r\geq 2$, our goal is to find $v\in C^{r,\alpha}(M,\mathbb{R}^N)$ satisfying \eqref{conformal embedding equation}. \par
We first consider the term with trace:
   $\nabla u\cdot \nabla v+\nabla v\cdot \nabla u +\nabla v\cdot \nabla v$, apply $\Delta-1$ to it and set it equal to $f$:
   \begin{equation*}
    (\Delta-1)(\nabla u\cdot \nabla v)+(\Delta-1)(\nabla v\cdot \nabla u)+(\Delta-1)(\nabla v\cdot \nabla v)=(\Delta-1)f,
\end{equation*}
where the third derivatives of $v$ are to be considered as distributions if $r<3$, and this won't affect the following computation.
\par
Careful treatment is needed by  using a local coordinates, denoted as  $\{x^i\}_{1\le i \le n}$, which are not necessarily normal coordinates. The first term to compute is the quadratic term $(\Delta-1)(\nabla_iv\cdot \nabla_j v)$ involving $v$:

\begin{equation*}
    \begin{aligned}
    (\Delta&-1)(\nabla_iv \,dx^i\cdot\nabla_jv \,dx^j)\\
    =&\nabla^l(\nabla_l(\nabla_iv \,dx^i\cdot\nabla_jv\,dx^j))-\nabla_iv\cdot\nabla_jv\,dx^i\otimes dx^j\\
    =&\Delta(\nabla_iv\,dx^i)\cdot(\nabla_jv\,dx^j)+(\nabla_iv\,dx^i)\cdot\Delta(\nabla_jv\,dx^j)\\
    &+2\nabla^l(\nabla_iv\,dx^i)\cdot\nabla_l(\nabla_jv\,dx^j)-\nabla_iv\cdot\nabla_jv\,dx^i\otimes dx^j\\
    =&\nabla_i(\Delta v)\,dx^i\cdot\nabla_jv\,dx^j+R_i{}^k\nabla_kv \,dx^i\cdot\nabla_jv\,dx^j+\nabla_iv\,dx^i\cdot\nabla_j(\Delta v)\,dx^j\\
   \, &+\nabla_iv \,dx^i\cdot R_j{}^k\nabla_kv\,dx^j+2\nabla^l(\nabla_iv\,dx^i)\cdot\nabla_l(\nabla_jv\,dx^j)-\nabla_iv\cdot\nabla_jv\,dx^i\otimes dx^j\\
    =&\nabla_i(\Delta v\cdot\nabla_j v\,dx^j)dx^i-\Delta v\cdot\nabla_i(\nabla_j v
    \,dx^j) dx^i+
    \nabla_j(\nabla_iv\,dx^i\cdot\Delta v)dx^j-\nabla_j(\nabla_iv\,dx^i)dx^j\cdot \Delta v \\
    \,&+(R_i{}^k\nabla_jv+R_j{}^k\nabla_iv)\cdot \nabla_kv\,dx^i\otimes dx^j
    +2\nabla^l(\nabla_iv\,dx^i)\cdot\nabla_l(\nabla_jv\,dx^j)-\nabla_iv\cdot\nabla_jv\,dx^i\otimes dx^j\\
    =&2L_{ij}(v,v) dx^i\otimes dx^j+\nabla_i(\Delta v\cdot\nabla_j v\,dx^j)dx^i+\nabla_j(\nabla_iv\,dx^i\cdot\Delta v)dx^j,
\end{aligned}
\end{equation*}
here for brevity, we denoted 

\begin{align}
    L_{ij}(v,v)\,dx^i\otimes dx^j:=&\nabla^l(\nabla_iv\,dx^i)\cdot\nabla_l(\nabla_jv\,dx^j)-\Delta v\cdot\nabla_i(\nabla_jv\,dx^j)dx^i\notag\\
    &+\{-\frac{1}{2}\nabla_iv \cdot\nabla_jv+\frac{1}{2}(R_i{}^k\nabla_jv+R_j{}^k\nabla_iv)\cdot \nabla_kv\}dx^i\otimes dx^j.\label{2.16}
\end{align}
For the other terms involving $u $ and $v$, we get the following by switching the Laplacian and the covariant derivative:
\begin{equation*}
\begin{aligned}
    (\Delta&-1)(\nabla_iu dx^i\cdot \nabla_jv dx^j)\\
    =&(\Delta-1)(\nabla_j(\nabla_iu dx^i\cdot v)dx^j)-(\Delta-1)(\nabla_j(\nabla_iu dx^i)dx^j\cdot v)\\
    =&\nabla_j((\Delta-1)(\nabla_iu\,dx^i\cdot v))dx^j+\{2R_j{}^k{}_i{}^n\nabla_k(\nabla_n u\cdot v) +R_{j}{}^k{}_i{}^n(\nabla_m u\cdot v)(-\Gamma_{kn}^m)+\nabla^k({R_{jki}}^n)(\nabla_n u\cdot v)\\
   &+g^{kl}{R_{jmi}}^n(\nabla_n u\cdot v) (-\Gamma_{lk}^m)+R_j{}^n\nabla_n(\nabla_i u\cdot v) +R_j{}^n (\nabla_m u\cdot v) (-\Gamma_{ni}^m)\}dx^i\otimes dx^j
   \\&-(\Delta-1)(\nabla_j\nabla_iu\cdot v\,dx^i\otimes dx^j)
   -(\Delta-1)(\nabla_nu\,\Gamma_{ji}^n\cdot v\,dx^i\otimes dx^j).
\end{aligned}
\end{equation*}
Similar computation for the other one, 
\begin{equation*}
    \begin{aligned}
(\Delta&-1)(\nabla_ju\,dx^j\cdot \nabla_iv\,dx^i)\\=&\nabla_i((\Delta-1)(\nabla_ju\,dx^j\cdot v))dx^i+\{2R_i{}^k{}_j{}^n\nabla_k(\nabla_n u\cdot v) +R_{i}{}^k{}_j{}^n(\nabla_m u\cdot v)(-\Gamma_{kn}^m)+\nabla^k({R_{ikj}}^n)(\nabla_n u\cdot v)\\ &+g^{kl}{R_{imj}}^n(\nabla_n u\cdot v) (-\Gamma_{lk}^m)+R_i{}^n\nabla_n(\nabla_j u\cdot v) +R_i{}^n (\nabla_m u\cdot v) (-\Gamma_{nj}^m)\}dx^i\otimes dx^j\\
&-(\Delta-1)(\nabla_i\nabla_ju\cdot v\, dx^i\otimes dx^j)-(\Delta-1)(\nabla_n u\, \Gamma_{ij}^n\cdot v\, dx^i\otimes dx^j).
    \end{aligned}
\end{equation*}
We could denote the following notion of $r_{ij}^n$, for $w=w_ndx^n\in C^{r,\alpha}(M,T_x^*M)$:
\begin{equation}\label{2.19}
\begin{aligned}
r_{ij}^nw_ndx^i\otimes dx^j:=&\{
2R_i{}^k{}_j{}^n\nabla_kw_n +R_{i}{}^k{}_j{}^mw_n(-\Gamma_{km}^n)+\nabla^k({R_{ikj}}^n)w_n \\
&+g^{kl}{R_{imj}}^nw_n (-\Gamma_{lk}^m)+R_i{}^m w_n (-\Gamma_{mj}^n)\}dx^i\otimes dx^j.
\end{aligned}
\end{equation}\par
Combining everything, we get: 
\begin{equation}\label{vanishing terms}
\begin{aligned}
&(\Delta-1)(\nabla_iu\cdot\nabla_jv\,dx^i\otimes dx^j+\nabla_ju\cdot\nabla_iv\,dx^i\otimes dx^j+\nabla_iv\cdot\nabla_jv \, dx^i\otimes dx^j)\\
=& \nabla_j\{(\Delta-1)(\nabla_iu\,dx^i\cdot v)+\nabla_iv\,dx^i\cdot\Delta v\}dx^j+\nabla_i\{(\Delta-1)(\nabla_ju\,dx^j\cdot v)+\Delta v\cdot\nabla_j v\,dx^j\}dx^i\\
&+\{2L_{ij}(v,v)+r^n_{ij}(\nabla_n u\cdot v)+R_i^n\nabla_n(\nabla_ju \cdot v)+r^n_{ji}(\nabla_n u\cdot v)+R_j^n\nabla_n(\nabla_iu \cdot v)\}dx^i\otimes dx^j\\
&-2(\Delta-1)(\nabla_i(\nabla_ju\,dx^j)dx^i\cdot v).
\end{aligned}
\end{equation}
In \cite{G1},  G\"unther made the key observation that if  $\nabla_i u \cdot v dx^i$ equal to $-(\Delta-1)^{-1}\{\Delta v\cdot \nabla_iv\,dx^i\}$, multiple benefits will arise, including the ability to \textbf{avoid loss of differentiability}. In fact, if we force $\nabla_i u \cdot v dx^i=-(\Delta-1)^{-1}\{\Delta v\cdot \nabla_iv\,dx^i\}$, the equation
\begin{equation*}
    f=\nabla  v \cdot\nabla u+\nabla u \cdot\nabla v +\nabla v\cdot \nabla v
\end{equation*}
can be solved by the $v$ that satisfies the following:
\begin{equation}\label{defi of u*v}
    \begin{aligned}
        \nabla_i u \,dx^i \cdot v 
        =&-(\Delta-1)^{-1}\{\Delta v\cdot \nabla_iv\,dx^i\},\\
        \nabla_i(\nabla_j u \,dx^j) dx^i\cdot v =& \frac{1}{2}(\Delta-1)^{-1}(\{2L_{ij}(v,v)+(r^n_{ij}+r^n_{ji})(\nabla_n u\cdot v )\\
        &+R_j{}^n\nabla_n(\nabla_i u\cdot v )+R_i{}^n\nabla_n(\nabla_ju \cdot v)\}dx^i\otimes dx^j)-\frac{1}{2}f_{ij}dx^i\otimes dx^j\\
       = & \frac{1}{2}(\Delta-1)^{-1}\Big([2L_{ij}(v,v)+(r_{ij}^n+r_{ji}^n)(-(\Delta-1)^{-1}\{\Delta v\cdot \nabla v\})_n\\
        &+R_j{}^n\nabla_n((\Delta-1)^{-1}\{\Delta v\cdot \nabla v\})_i+R_i{}^n\nabla_n((\Delta-1)^{-1}\{\Delta v\cdot \nabla v\})_j]dx^i\otimes dx^j \Big)\\
&-\frac{1}{2}f_{ij}dx^i\otimes dx^j.
    \end{aligned}
\end{equation}
Here, the notation $(\quad )_j$ denotes the coefficient of the 1-form inside it with respect to $dx^j$, i.e., $w=(w)_jdx^j$ for any 1-form $w$.  Note that, aside from $f$, the right-hand side of \eqref{defi of u*v} does not involve $u$, is quadratic about $v$, and \textbf{does not lose the differentiability of $v$,} i.e., if $v\in C^{r,\alpha}(M,\mathbb{R}^N)$ then the terms are in $C^{r,\alpha}(M,\mathrm{Sym}^{\otimes 2} T_x^*M)$. Then, define  $Q_u(v,v)\in C^{r,\alpha}(M,\mathbb{R}^N)$ as the unique solution (thanks to Lemma \ref{Free mapping's linear algebra lemma}) of the following:
\begin{equation}\label{defi of Q(v,v)}
    \begin{aligned}
        \nabla u\cdot Q_u(v,v) 
        =&-(\Delta-1)^{-1}\{\Delta v\cdot \nabla v\}, \\
        \nabla\nabla u\cdot Q_u(v,v) =& M_{ij}(v),
   \end{aligned}
\end{equation}
        \begin{align*}
  \text{where }  \quad M_{ij}(v):=&\frac{1}{2}(\Delta-1)^{-1}([2L_{ij}(v,v)+(r_{ij}^n+r_{ji}^n)(-(\Delta-1)^{-1}\{\Delta v\cdot \nabla v\})_n\\
        +R_j{}^n&\nabla_n((\Delta-1)^{-1}\{\Delta v\cdot \nabla v\})_i+R_i{}^n\nabla_n((\Delta-1)^{-1}\{\Delta v\cdot \nabla v\})_j]dx^i\otimes dx^j ).       
        \end{align*}

By the freeness of $u$ and by applying Lemma \ref{Free mapping's linear algebra lemma},  $Q_u(v,v)$ exists and is unique if we require that $Q_u(v,v)(x)\perp \mathrm{Ker}P(u)(x)$ for any $x\in M$. Hence \eqref{vanishing terms} can be rewritten as
\begin{equation*}
\begin{aligned}
    &(\Delta -1)(\nabla u\cdot \nabla v+\nabla v\cdot \nabla u+ \nabla v \cdot \nabla v)\\
    =&2\mathrm{Sym}(\nabla\{(\Delta-1)(\nabla u\cdot (v-Q_u(v,v)))\})-2(\Delta-1)\{\nabla\nabla u\cdot (v-Q_u(v,v))\},
\end{aligned}
\end{equation*}
where $\mathrm{Sym}$ is the \textit{symmetrization} of a 2-tensor. Using the notation $\mathrm{tr}^\perp_g$ in Remark \ref{denotion of C}, the  conformal equation (\ref{conformal embedding equation})  can be rewritten as
\begin{equation*}
   \mathrm{tr}_g^\perp(f)= \mathrm{tr}_g^\perp\Big(2(\Delta-1)^{-1}\mathrm{Sym}(\nabla\{(\Delta-1)(\nabla u\cdot (v-Q_u(v,v)))\})-2\nabla\nabla u\cdot (v-Q_u(v,v))
   \Big).\end{equation*}
\par
To summarize, we have the conformal version of G\"unther's lemma.
\begin{lemm}[G\"unther's Lemma with conformal operator] Let $u\in C^\infty(M,\mathbb{R}^N)$ be a free embedding. Then the following conformal embedding equation \eqref{conformal embedding equation} that aims to find $v\in C^{r,\alpha}(M,\mathbb{R}^N), \, r\geq 2$
\begin{equation*}
    \mathrm{tr}_g^\perp(\nabla u\cdot \nabla v)+\mathrm{tr}_g^\perp(\nabla v\cdot \nabla u)+\mathrm{tr}_g^\perp (\nabla v\cdot \nabla v)=\mathrm{tr}_g^\perp(f)
\end{equation*}
is equivalent to
\begin{equation}\label{conformal gunther equation}
    \mathrm{tr}_g^\perp(f)= \mathrm{tr}_g^\perp\Big(2(\Delta-1)^{-1}\mathrm{Sym}(\nabla\{(\Delta-1)(\nabla u\cdot (v-Q_u(v,v)))\})-2\nabla\nabla u\cdot (v-Q_u(v,v))
   \Big),
\end{equation}
   where  $Q_u(v,v)\in C^{r,\alpha}(M,\mathbb{R}^N),\, r\geq 2 $ is  quadratic about $v$,  as defined in \eqref{defi of Q(v,v)}.
\end{lemm}

\begin{rema}[Nash's simplification]\label{Nash's simplification}
    In \cite{N2}, one of Nash's tricks is to force $\nabla u \cdot v=0$ to simplify the original local isometric immersion equation, see equation (B3) in \cite{N2}. G\"unther's trick in \cite{G1}, instead, is to force $\nabla u \cdot (v-Q_u(v,v))=0$, thus \eqref{conformal gunther equation} will become 
    \begin{equation}
        P_c(u)\cdot (v-Q_u(v,v))=\begin{bmatrix}
            0\\
            \frac{-(f-\frac{\mathrm{tr}_g f}{n}g)}{2}
        \end{bmatrix},
    \end{equation}
    where $P_c(u)$ is as defined in Definition \ref{def of P and P_c}.
\end{rema}
\section{The properties of $P_c$, and its right inverses}
\subsection{Singularity of $P_c(\Psi_t)P_c^T(\Psi_t)$.}
In Proposition \ref{P_c ker}, for any free embedding $u\in C^\infty (M,\mathbb{R}^N)$, we have seen that $P_c(u)$ is of rank $\frac{n(n+3)}{2}-1$ in normal coordinates at a point $x\in M$. Hence, naturally, $P_c(u)P_c^T(u)(x)$ is singular and of rank $\frac{n(n+3)}{2}-1$. \par

To apply this to our case, we need to show that the functions that of interest are free mappings. It is established in \cite[Theorem 18]{WZ} that $\Psi_t:M\longrightarrow \ell^2$ is a free mapping by expanding the heat kernel. In Corollary \ref{coro on freeness of truncated}, we will justify the freeness of the truncated map $\Psi_{t,\eta_i}^{q(t)}: M\longrightarrow \mathbb{R}^{q(t)}$ as defined in Definition \ref{defi of truncation}.\par
For additional insight, in this subsection, we  will  explicitly present the matrix expression of $P_c(\Psi_t)P_c^T(\Psi_t)$  and compute its rank in proposition \ref{P_cP_c^T}. 
Here with respect to normal coordinates in the neighbourhood of $x\in M$, $P(\Psi_t)(x)$ and $P_c(\Psi_t)(x)$ are taken as $\frac{n(n+3)}{2}\times \infty$ matrices for $\Psi_t:M\longrightarrow \ell^2$, while $P(\Psi_{t,\eta_i}^{q(t)})(x)$ and $P_c(\Psi_{t,\eta_i}^{q(t)})(x)$ are taken as $\frac{n(n+3)}{2}\times \infty$ matrices.  \par

We need a proposition on linear algebra first. In the remainder of this paper, $J_n$ denotes  an $n\times n$ matrix with all entries equal $1$.
\begin{prop}\label{Xi}
Let $\sigma\in (-\frac{1}{n-1},1)$. Then the $n\times n$ matrix
\begin{equation}
    \Xi_n(\sigma):=[\theta_{ij}]_{1\leq i,j\leq n}
\end{equation}
with $\theta_{ii}=1$ and $\theta_{ij}=\sigma$ $(i\neq j)$ is invertible. And the condition for $\sigma> -\frac{1}{n-1}$ is sharp, more precisely, 
    $\Xi_n(-\frac{1}{n-1})$ is not invertible and of rank $n-1$.

\end{prop}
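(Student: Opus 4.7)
The plan is to rewrite $\Xi_n(\sigma)$ in terms of two commuting, diagonalizable pieces and read off its spectrum. Specifically, I would observe that
\begin{equation*}
    \Xi_n(\sigma) = (1-\sigma)\, I_n + \sigma\, J_n,
\end{equation*}
where $J_n$ is the $n\times n$ all-ones matrix. Since $J_n$ has rank one with nonzero eigenvalue $n$ (eigenvector $\mathbf{1} = (1,\ldots,1)^T$) and the remaining eigenvalue $0$ of multiplicity $n-1$ (eigenspace $\mathbf{1}^\perp$), the spectrum of $\Xi_n(\sigma)$ is immediate: the eigenvalue $1 + (n-1)\sigma$ on the line $\RR\cdot\mathbf{1}$ and the eigenvalue $1 - \sigma$ with multiplicity $n-1$ on $\mathbf{1}^\perp$.

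From the spectrum, invertibility of $\Xi_n(\sigma)$ is equivalent to both eigenvalues being nonzero, i.e., $\sigma \neq -\tfrac{1}{n-1}$ and $\sigma \neq 1$. For $\sigma$ in the open interval $(-\tfrac{1}{n-1},\,1)$ one has $1+(n-1)\sigma > 0$ and $1-\sigma > 0$; in particular $\Xi_n(\sigma)$ is symmetric and positive definite, hence invertible. At the boundary value $\sigma = -\tfrac{1}{n-1}$, the eigenvalue $1+(n-1)\sigma$ collapses to $0$ while the remaining eigenvalue equals $\tfrac{n}{n-1} \neq 0$ with multiplicity $n-1$; the rank of $\Xi_n(-\tfrac{1}{n-1})$ is therefore exactly $n-1$, which gives the sharpness assertion.

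Since the argument is only a spectral decomposition of a matrix of the form $a I_n + b J_n$, I do not foresee any real obstacle; the only thing to keep straight is which eigenvalue carries which multiplicity, and it is the one-dimensional eigenspace spanned by $\mathbf{1}$ that is responsible for the drop in rank at $\sigma = -\tfrac{1}{n-1}$.
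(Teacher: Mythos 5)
Your proof is correct. The underlying decomposition is the same one the paper relies on --- everything reduces to the all-ones matrix $J_n$ --- but you carry it further than the paper does: the paper handles the invertibility of $\Xi_n(\sigma)$ for $\sigma\in(-\tfrac{1}{n-1},1)$ by citing Corollary~26 of Wang--Zhu, and for the sharpness it merely rewrites $(n-1)\,\Xi_n(-\tfrac{1}{n-1})=nI_n-J_n$ and asserts that this has rank $n-1$. By writing $\Xi_n(\sigma)=(1-\sigma)I_n+\sigma J_n$ and reading off the full spectrum (eigenvalue $1+(n-1)\sigma$ on $\mathbb{R}\cdot\mathbf{1}$, eigenvalue $1-\sigma$ with multiplicity $n-1$ on $\mathbf{1}^\perp$), you obtain a self-contained argument that simultaneously proves the positive-definiteness (hence invertibility) on the open interval and pins down the exact rank drop at the endpoint, correctly attributing it to the one-dimensional eigenspace spanned by $\mathbf{1}$. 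Nothing is missing; your version is, if anything, the cleaner and more complete of the two.
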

\begin{proof}
The invertibility of $\Xi_n(\sigma)$ when $-\frac{1}{n-1}<\sigma<1$ is due to \cite[Corollary 26]{WZ}. We only need to verify the rank of $\Xi_n(-\frac{1}{n-1})$ is $n-1$. Indeed, let $J_n$ be an $n\times n$ matrix consisting all $1$'s,
\begin{equation*}
    (n-1)\cdot \Xi_n(-\frac{1}{n-1})=  \begin{bmatrix}
(n-1) & -1  & \dots&-1\\
-1& (n-1)  & \dots&-1\\
\cdots\\
-1& -1 &  \cdots&(n-1)\\
\end{bmatrix}=nI_n-J_n,
\end{equation*}
  it can be easily seen that it is of rank $n-1$.\end{proof}

\begin{prop}\label{P_cP_c^T}
 With respect to normal coordinates in the neighbourhood of $x\in M$, the 
$\frac{n(n+3)}{2}\times \infty$ matrix $P_c(\Psi_t)(x)$ can be expressed in the following way:
\begin{equation}
P_c(\Psi_t)(x)
=[\nabla_i\Psi_t(x)\quad \nabla_i\nabla_j\Psi_t(x) \quad \nabla_k\nabla_k\Psi_t(x)-\frac{\sum_p \nabla_p\nabla_p\Psi_t}{n}(x)]^T
\end{equation}
 $i\neq j$, $1\leq i,j,k,p\leq n$, then locally we have the following when $t\rightarrow 0_+$:
\begin{equation}\label{exp P_cP_cT}
    \begin{aligned}
    P_c&(\Psi_t)P_c^T(\Psi_t)(x)\\
    &=\begin{bmatrix}
I_n+O(t) &  O(t)\\
O(t) & \frac{1}{2t}\cdot \begin{pmatrix}
\begin{bmatrix}
  I_{\frac{n(n-1)}{2}} & 0\\ 
  0 & 3 \cdot \Xi(\frac{1}{3})-\frac{n+2}{n}\cdot J_n
\end{bmatrix}
+O(t)
\end{pmatrix}
\end{bmatrix}.
    \end{aligned}
\end{equation}
Furthermore, the $n \times n$ matrix
  \begin{equation}\label{rank of M}
3\cdot \Xi(\frac{1}{3})-\frac{n+2}{n}\cdot J_n
\end{equation}
is of rank $\frac{n(n+1)}{2}-1$.

\end{prop}
\begin{proof}
    The expression (\ref{exp P_cP_cT}) can be achieved by employing the formulas in \cite[Proposition 21]{WZ} and direct computation, which we omit here. \par
    To see the rank of (\ref{rank of M}), notice that $$3\cdot \Xi(\frac{1}{3})-\frac{n+2}{n}\cdot J_n= \frac{2n-2}{n}\cdot \Xi(-\frac{1}{n-1}).$$  By  Proposition \ref{Xi}, we know $\Xi(-\frac{1}{n-1})$  is not invertible and of rank $n-1$. 
\end{proof}
The following proposition is due to \cite[Corollary 29]{WZ}. It is interesting to compare it with the case of $P_c(\Psi_t)P_c^T(\Psi_t)$ described in Proposition \ref{P_cP_c^T}.
\begin{prop}
    \label{construction of PPT}
For each point $x\in M$, and with respect to  normal coordinates  around $x$, we have: 
\begin{equation}\label{expression of PP^T}
 \begin{aligned}
    P&(\Psi_t)P^T(\Psi_t)(x)\\
    &=\begin{bmatrix}
I_n+O(t) &   O(t)\\
 O(t) & \frac{1}{2t}\cdot 
\begin{bmatrix}
  I_{\frac{n(n-1)}{2}} & 0\\ 
  0 & 3\cdot \Xi(\frac{1}{3})
\end{bmatrix}
+O(t)

\end{bmatrix}.
    \end{aligned}
\end{equation}
Moreover, this $P(\Psi_t)P^T(\Psi_t)(x)$ is invertible.
\end{prop}

\begin{coro}\label{coro on freeness of truncated}
    Given $\eta_i$ and let $t\in(0,t_0]$ as in Theorem \ref{main thm 1.1} and \ref{mainthm 1.3}. For $q(t)\geq t^{-\frac{n}{2}-1}$, 
    the truncated mapping $\Psi_{t,\eta_i}^{q(t)}: M\longrightarrow \mathbb{R}^{q(t)}$ is a \emph{free mapping}. Moreover, $P(\Psi_{t,\eta_i}^{q(t)})P^T(\Psi_{t,\eta_i}^{q(t)})(x)$ and $P_c(\Psi_{t,\eta_i}^{q(t)})P_c^T(\Psi_{t,\eta_i}^{q(t)})(x)$ have the same expression with $P(\Psi_t)P^T(\Psi_t)(x)$ and $P_c(\Psi_t)P_c^T(\Psi_t)(x)$ as in  \eqref{expression of PP^T} and \eqref{exp P_cP_cT}, respectively.
\end{coro}
\begin{proof}
 By Proposition \ref{weyl law}, for given $\eta_i$, we have 
\begin{equation*}
\begin{aligned}
    &\lVert P_c(\Psi_{t,\eta_i}^{q(t)})P_c^T(\Psi_{t,\eta_i}^{q(t)})(x) -P_c(\Psi_t)P_c^T(\Psi_t)(x)\rVert_{C^r(M)} \\
    \leq& 2\sum_{|\Vec{\gamma}|\leq2,|\Vec{\beta}|\leq2} |\sum_{j\geq q(t)+1} e^{-\lambda_jt}D^{\Vec{\gamma}}\phi_jD^{\vec{\beta}}\phi_j| \leq C e^{(-t^{-\frac{1}{n}})}
\end{aligned}
\end{equation*}
 for any $r\geq 0$, where we are taking the above matrix norm $\lVert \,\cdot\, \rVert_{C^r}$ as the maximum of each entry's $C^r$ norm. Therefore $P_c(\Psi_{t,\eta_i}^{q(t)})P_c^T(\Psi_{t,\eta_i}^{q(t)})(x)$ has the same expression as in \eqref{exp P_cP_cT}. Similarly, $P(\Psi_{t,\eta_i}^{q(t)})P^T(\Psi_{t,\eta_i}^{q(t)})(x)$ has the same expression as in  \eqref{expression of PP^T}.\par
 Consequently, $P(\Psi_{t,\eta_i}^{q(t)})P^T(\Psi_{t,\eta_i}^{q(t)})(x)$ is nonsingular for each point $x\in M$ as an $\frac{n(n+3)}{2}\times \frac{n(n+3)}{2}$ matrix. Thus we obtain the freeness of $\Psi_{t,\eta_i}^{q(t)}$.
\end{proof}

\subsection{Right inverses of $P_c$.}\label{sec: E}

In the last subsection, we see that the $P_c(\Psi_{t,\eta_i}^{q(t)})P_c^T(\Psi_{t,\eta_i}^{q(t)})(x)$ is of rank $\frac{n(n+3)}{2}-1$. This implies that we cannot expect to find a right inverse operator of $P_c(\Psi_{t,\eta_i}^{q(t)})$ for arbitrary right-hand vectors, but still can find the right inverses of $P_c(\Psi_{t,\eta_i}^{q(t)})$ for the right-hand ones in the image of $P_c(\Psi_{t,\eta_i}^{q(t)})$. This illustrates  the difference between the local conformal embedding question and the local isometric one.\par
Recall in Corollary \ref{coro 2.10}, the remainder term $O(t^l)$ is a symmetric 2-tensor subtracting its own trace, we denote it as $h$, which corresponds to the small difference term $f-\frac{\tr_g f}{n}g$ in the conformal embedding equation \eqref{conformal embedding equation}. Additionally, let $G\subset \mathrm{Sym}^{\otimes 2}T^*M$ be the subbundle defined by:

\begin{equation}\label{definition of bundle G}
    G_x:=\{ s_x\in \mathrm{Sym}^{\otimes 2}T^*M
 \big|\,  \mathrm{tr}_{g(x)}s_x=0\}.
\end{equation} Thus, $h\in C^{r,\alpha}(M,G) $, and $P_c(\Psi_{t,\eta_i}^{q(t)})$ becomes surjective onto $C^{r,\alpha}(M,G) $, thus has right inverse.

Our goal now is to explicitly construct a family of solutions of the following equation
\begin{equation*}
    P_c(\Psi_{t,\eta_i}^{q(t)})\cdot v=\begin{bmatrix}
        0 \\ h
    \end{bmatrix},
\end{equation*}
where we are solving for $v\in C^{r,\alpha}(M,\mathbb{R}^{q(t)}), \text{ and } v(x)\perp \mathrm{Ker}P(\Psi_{t,\eta_i}^{q(t)})(x)$.
Note that it is a version of \eqref{2.6} with $\Psi_{t,\eta_i}^{q(t)}$ involved.

\begin{theo} \label{construction of E_c}
For $q=q(t)\geq t^{-\frac{n}{2}-1} $, assume that $\Psi_{t,\eta_i}^{q(t)}\in C^\infty(M,\mathbb{R}^{q(t)})$ is defined as before and define the traceless 2-tensor bundle $G$ as in \eqref{definition of bundle G}.\par
Then for $h\in C^{r,\alpha}(M,G)$, $v\in C^{r,\alpha}(M,\mathbb{R}^q), \text{ and } v(x)\perp \mathrm{Ker}P(\Psi_{t,\eta_i}^{q(t)})(x)$,
we have
\begin{equation}
    E(\Psi_{t,\eta_i}^{q(t)})(0,h)+kE(\Psi_{t,\eta_i}^{q(t)})(0,g)=v \qquad \Longleftrightarrow \qquad \begin{bmatrix}0 \\h\end{bmatrix}=P_c(\Psi_{t,\eta_i}^{q(t)}) \cdot v
\end{equation} for some $k\in C^{r,\alpha}(M,\mathbb{R})$, where $E(\Psi_{t,\eta_i}^{q(t)}):C^{r,\alpha}(M,T^*M)\times C^{r,\alpha}(M,G)\longrightarrow C^{r,\alpha}(M,\mathbb{R}^q),\, r\geq 2$ is the right inverse of $P(\Psi_{t,\eta_i}^{q(t)})$ defined as  
$$E(\Psi_{t,\eta_i}^{q(t)})(x):= P^T(\Psi_{t,\eta_i}^{q(t)})(x)[P(\Psi_{t,\eta_i}^{q(t)})P^T(\Psi_{t,\eta_i}^{q(t)})(x)]^{-1}.$$ 
\end{theo}
\begin{proof}
It is sufficient to present the proof at a point $x\in M$  with normal coordinates near the point $x$.\par
By linear algebra, since the rank of $P_c(\Psi_{t,\eta_i}^{q(t)})(x)$ is of $\frac{n(n+3)}{2}-1$ (Proposition \ref{P_c ker}), and $[0, h]^T(x)\in \mathrm{Im}P_c(\Psi_{t,\eta_i}^{q(t)})(x)$, we know for $v(x)\perp \mathrm{Ker} P(\Psi_{t,\eta_i}^{q(t)})(x)$, the linear system 
\begin{equation}\label{linear system of P_c}
    P_c(\Psi_{t,\eta_i}^{q(t)})(x)\cdot v(x)=\begin{bmatrix}
        0\\
        h
    \end{bmatrix}(x)
\end{equation}
has solution, and the solution space is of $1$ dimensional. Thus we see all the solutions are of the form 
\begin{equation*}
    v(x)=E(\Psi_{t,\eta_i}^{q(t)})(0,h)(x)+k(x)E(\Psi_{t,\eta_i}^{q(t)})(0,g)(x)
\end{equation*}
once the followings are verified: \begin{enumerate}[label=(\roman*)]
    \item $v(x)=E(\Psi_{t,\eta_i}^{q(t)})(0,h)(x)$ is a special solution of \eqref{linear system of P_c}, and
    \item $E(\Psi_{t,\eta_i}^{q(t)})(0,g)(x)$ is in $\mathrm{Ker} P(\Psi_{t,\eta_i}^{q(t)})(x)$.
\end{enumerate}\par
Having the definitions of $P(\Psi_{t,\eta_i}^{q(t)})$ and $P_c(\Psi_{t,\eta_i}^{q(t)})$ as $\frac{n(n+3)}{2}\times q(t)$ matrix as in section 2.1, we have the following expression: 
\begin{equation}
    P_c(\Psi_{t,\eta_i}^{q(t)})=P(\Psi_{t,\eta_i}^{q(t)})-\frac{1}{n}\begin{bmatrix}
      0&0\\
      0& J_n
    \end{bmatrix}P(\Psi_{t,\eta_i}^{q(t)}).
\end{equation}
Moreover, for $h=f-\frac{\tr_g f}{n}g$, $f\in\Gamma( \mathrm{Sym}^{\otimes 2}T^*M)$, and let $f_{ij}$ be the coefficient of the 2-tensor $f$, we write $(0,h)^T(x)$ with diagonal in the last $n$ rows  as in Notation \eqref{Notaion: large matrix}:

\begin{equation*}
    \begin{bmatrix}
      0\\
      h
    \end{bmatrix}(x)=\begin{bmatrix}
      0 &
      \dots  &
      0  &
      f_{1\,2}&
      \dots&
      f_{n-1\, n}&
      f_{1\,1}-\frac{1}{n}\sum\limits_{k=1}^n f_{k\,k}&
      \dots &
      f_{n\,n}-\frac{1}{n}\sum\limits_{k=1}^n f_{k\,k}
    \end{bmatrix}^T(x).
\end{equation*}
Then in such expression we obtain $\begin{bmatrix}
      0&0\\
      0& J_n
    \end{bmatrix}\begin{bmatrix}
  0\\h
\end{bmatrix}(x)=0$.
Hence for $v(x)=E(\Psi_{t,\eta_i}^{q(t)})(0,h)(x)$, 
\begin{equation*}
\begin{aligned}
    P_c(\Psi_{t,\eta_i}^{q(t)})(x)v(x)&=P_c(\Psi_{t,\eta_i}^{q(t)})(x)P^T(\Psi_{t,\eta_i}^{q(t)})[P(\Psi_{t,\eta_i}^{q(t)})P^T(\Psi_{t,\eta_i}^{q(t)})]^{-1}(x)\begin{bmatrix}
  0\\
  h
\end{bmatrix}(x)\\
&=(I_{\frac{n(n+3)}{2}}-\frac{1}{n}\begin{bmatrix}
      0&0\\
      0& J_n
    \end{bmatrix})\begin{bmatrix}
      0\\h
    \end{bmatrix}(x)=\begin{bmatrix}
      0\\h
    \end{bmatrix}(x),
\end{aligned}
\end{equation*}
this proves (i). \par
The proof is done if we show (ii). Indeed, direct computation gives
\begin{equation*}
\begin{aligned}
    P_c(\Psi_{t,\eta_i}^{q(t)})(x)E(\Psi_{t,\eta_i}^{q(t)})(0&,g)(x)=P_c(\Psi_{t,\eta_i}^{q(t)})(x)P^T(\Psi_{t,\eta_i}^{q(t)})[P(\Psi_{t,\eta_i}^{q(t)})P^T(\Psi_{t,\eta_i}^{q(t)})]^{-1}(x)\begin{bmatrix}
  0\\
  g
\end{bmatrix}(x)\\
=&(I_{\frac{n(n+3)}{2}}-\frac{1}{n}\begin{bmatrix}
      0&0\\
      0& J_n
    \end{bmatrix})\begin{bmatrix}
         0&
         \cdots&
          0&
      1&
      \cdots&
      1
    \end{bmatrix}^T= \begin{bmatrix}
      0&
      \cdots&
      0   \end{bmatrix}^T.
\end{aligned}
\end{equation*}

\end{proof}

\subsection{Estimates about $E$.} In order to apply the implicit function theorem, we need to prepare the estimates of the  norms $\rVert E(\Psi_{t,\eta_i}^{q(t)})\lVert_{C^{r,\alpha}}$ and  $\rVert E(\Psi_{t,\eta_i}^{q(t)})(0,h)\lVert_{C^{r,\alpha}} $, where the latter represents the right inverse operator with $(0,h)$ as the input.\par
The following analytic preliminaries will be used in latter computations.
\begin{lemm}
    
\begin{enumerate}[label=(\roman*)]
\item If $u\in C^{r+1}(M)$, and manifold $M$ is compact, then for a fixed $0< \alpha <1$, there is a $C$ just about $M, \alpha, r$, such that 
\begin{equation*}
    ||u||_{C^{r,\alpha}}<C||u||_{C^{r+1}}.
\end{equation*}

\item Assuming the norm on the right-hand side exists, after using the former observation we have for functions $u,v$ on compact $M$, and constant $C_r$ about $r$
\begin{equation*}
    ||uv||_{C^{r,\alpha}}<C_r||u||_{C^{r,\alpha}}||v||_{C^{r,\alpha}},
\end{equation*}
and even a finer estimate, for $0\leq r_0<r$, and constant $C_r$ only about $r$:
\begin{equation*}
    ||uv||_{C^{r,\alpha}}<C_r(||u||_{C^{r_0,\alpha}}||v||_{C^{r,\alpha}}+||v||_{C^{r_0,\alpha}}||u||_{C^{r,\alpha}}+||u||_{C^{r-1,\alpha}}||v||_{C^{r-1,\alpha}}).
\end{equation*}
\end{enumerate}
\end{lemm}

\begin{prop} \label{derivative estimate of P}
Let $\Vec{\gamma}$ be a multi-index, and $|\Vec{\gamma}|$ be the sum of its components. As $t\rightarrow0_+$, the H\"{o}lder derivatives satisfy
\begin{equation*}
   \big[D^{\Vec{\gamma}}\Psi_{t,\eta_i}^{q(t)}(x)\big]_{\alpha;M}\leq Ct^{-\frac{|\Vec{\gamma}|-1+\alpha}{2}}, \,  \rVert\Psi_{t,\eta_i}^{q(t)}(x)\lVert_{C^{r,\alpha}(M)}\leq Ct^{-\frac{r-1+\alpha}{2}}
\end{equation*}
for some constant $C>0$.
\end{prop}
\begin{proof}
Recall that $\Phi_t$ is defined in Definition \ref{defi of psi_t}. The estimate about $\Phi_t$ is due to \cite[Proposition 24]{WZ}, which is 
\begin{equation*}
    \big[ D^{\Vec{\gamma}}\Phi_t(x)\big]_{\alpha;M}\leq Ct^{-\frac{n}{4}-\frac{|\Vec{\gamma}|+\alpha}{2}};\, \lVert \Phi_t(x)\rVert_{C^{r,\alpha}}\leq Ct^{-\frac{n}{4}-\frac{r+\alpha}{2}}.
\end{equation*}
Since the normalized $\Psi_t$ is defined by $\Psi_t=\sqrt{2}(4\pi)^{n/4}t^{\frac{n+2}{4}} \Phi_t$, and the estimates on the truncated tail of $\Psi_{t,\eta_i}^{q(t)}$ in Proposition \ref{weyl law},  we will have the inequalities of H\"older derivatives.
\end{proof}\begin{prop}\label{derivative estimate of E} 
For $q(t)\geq  t^{-\frac{n}{2}-1}$, $\eta_i\in C^\infty(M,\mathbb{R})$ given as in Proposition \ref{part 1 of mt}, $E(\Psi_{t,\eta_i}^{q(t)})$ has the $C^{r,\alpha}$ estimate and the operator norm 
\begin{equation}
   \rVert E(\Psi_{t,\eta_i}^{q(t)})\lVert_{C^{r,\alpha}(M)}\leq Ct^{-\frac{r+\alpha}{2}}, \quad
  \rVert E(\Psi_{t,\eta_i}^{q(t)})\lVert_{op}\leq Ct^{-\frac{r+\alpha}{2}}
\end{equation}
for a constant $C$ that depends, in particular, on $\eta_i$,  among others.
\end{prop}
\begin{proof}
    This proposition is due to \cite[Corollary 31]{WZ} on the estimates of $ \rVert E(\Psi_{t,\eta_i}^{q(t)})\lVert_{C^{r,\alpha}}$ and $   \rVert E(\Psi_{t,\eta_i}^{q(t)})\lVert_{op}$, and Proposition \ref{weyl law} on the estimates of truncated tail.
\end{proof}

\section{G\"unther's implicit function theorem}
As discussed in Remark \ref{Nash's simplification} and Theorem \ref{construction of E_c}, our goal is to seek the solutions of $v\in C^{r,\alpha}(M, \mathbb{R}^{q(t)})$, $r\geq2$, in the following equation: 
\begin{equation}\label{iteration equation}
 E(\Psi_{t,\eta_i}^{q(t)})(0,-\frac{1}{2}h) +kE(\Psi_{t,\eta_i}^{q(t)})(0, g)  =E(\Psi_{t,\eta_i}^{q(t)})(0,-\frac{1}{2}h+k\cdot g)=v-Q_{\Psi_{t,\eta_i}^{q(t)}}(v,v).
\end{equation}
 Recall that the definition of $Q_{\Psi_{t,\eta_i}^{q(t)}}(v,v)$ is given as in \eqref{defi of Q(v,v)} with $u$ substituted by $\Psi_{t,\eta_i}^{q(t)}$.

\par
\begin{lemm}[\protect{\cite[Proposition 33]{WZ}}]\label{Lemma: Q(v,v) estimates}
 Given $\eta_i\in C^\infty(M,\mathbb{R})$, for any $v\in C^{r,\alpha}(M,\mathbb{R}^q)$, $q=q(t)\geq t^{-\frac{n}{2}-1}$, we have 
\begin{equation*}
\rVert Q_{\Psi_{t,\eta_i}^{q(t)}}(v,v)\lVert_{C^{r,\alpha}(M,\mathbb{R}^q)}\leq C(r,\alpha,M,g,\eta_i)t^{-\frac{r+\alpha}{2}}\rVert v \lVert^2_{C^{r,\alpha}(M,\mathbb{R}^q)}   . 
\end{equation*}
\end{lemm}
\begin{proof}
    This lemma primarily follows easily from \cite[Proposition]{WZ} by passing to $\Psi_{t,\eta_i}^{q(t)}$ from Proposition \ref{weyl law}.  Notice that 
    \begin{equation*}
        \rVert Q_{\Psi_{t,\eta_i}^{q(t)}}(v,v)\lVert_{C^{r,\alpha}(M,\mathbb{R}^q)}\leq C(r,\alpha,M,g)
        \rVert E(\Psi_{t,\eta_i}^{q(t)})\lVert_{C^{r,\alpha}(M)}
        \rVert v \lVert^2_{C^{r,\alpha}(M,\mathbb{R}^q)},   
    \end{equation*}
    and since the constant of the inequality controlling $\|E(\Psi_{t,\eta_i}^{q(t)})\|_{C^{r,\alpha}(M)}$ in Proposition \ref{derivative estimate of E} depends on $\eta_i$, we see the constant in the inequality of this Lemma is moreover about $\eta_i$.
\end{proof}

\begin{rema}
Following the definition of $Q_{\Psi_{t,\eta_i}^{q(t)}}(v,v)$ and the former lemma, we note that $Q_{\Psi_{t,\eta_i}^{q(t)}}$ is a bilinear operator, which also has a norm estimate. For our purpose, we only need the following for $u,v \in C^{r,\alpha}(M,\mathbb{R}^q)$:
\begin{equation}\label{quadratic estimate}
\begin{aligned}
    &\rVert Q_{\Psi_{t,\eta_i}^{q(t)}}(v,v)-Q_{\Psi_{t,\eta_i}^{q(t)}}(u,u)\lVert_{C^{r,\alpha}(M,\mathbb{R}^q)}\\
    \leq &C(r,\alpha,M,g,\eta_i)t^{-\frac{r+\alpha}{2}}(\rVert v-u \lVert_{C^{r,\alpha}(M,\mathbb{R}^q)} )(\rVert v\lVert_{C^{r,\alpha}(M,\mathbb{R}^q)}+\rVert  u \lVert_{C^{r,\alpha}(M,\mathbb{R}^q)}) .
\end{aligned}
\end{equation}
\end{rema}
The following theorem  states the unique existence of the solution of \eqref{iteration equation} under a control condition of $E(\Psi_{t,\eta_i}^{q(t)})$.
\begin{theo}\label{Banach fixed point}
  Assume $\Psi_{t,\eta_i}^{q(t)}$ defined as proceeding, particularly it is a free mapping, and the not necessarily traceless remainder $h\in C^{r,\alpha}(M,\mathrm{Sym}^{\otimes 2}T^*M)$ with $r\geq 2$ , then there exists a constant $\theta $ uniform for all $q(t)\geq t^{-\frac{n}{2}-1}$ that satisfies the property: if 
\begin{equation}\label{on theta}
    t^{-\frac{r+\alpha}{2}} \cdot \rVert E(\Psi_{t,\eta_i}^{q(t)})(0,h)\lVert_{C^{r,\alpha}}<\theta,
\end{equation}
then the following fixed point equation has a unique solution in $C^{r,\alpha}(M,\mathbb{R}^q)$:
\begin{equation*}
    E(\Psi_{t,\eta_i}^{q(t)})(0,-\frac{1}{2}h)+Q_{\Psi_{t,\eta_i}^{q(t)}}(v,v)=v.
\end{equation*}
\end{theo} 
    The proof of this theorem is essentially due to the main theorem of G\"unther \cite{G1}. For the completeness of the article, we will present it here.
    \begin{proof}
    To prove this fixed point theorem, we  first find the solution $v\in C^{2,\alpha}$ and  subsequently establish its regularity as $C^{r,\alpha}$, $r\geq 3$.
    The initial step is defining $v_0=0$ and, for $\tau=0,1,2,\cdots $, iterating  $v_\tau$ by the equation:
    \begin{equation}\label{definition of v_t}
        v_{\tau+1}:= E(\Psi_{t,\eta_i}^{q(t)})(0,-\frac{1}{2}h)+Q_{\Psi_{t,\eta_i}^{q(t)}}(v_\tau,v_\tau).
    \end{equation}
    Our objective is to show that the sequence of $\{v_\tau\}$ converges in $C^{2,\alpha}$. Using Lemma \ref{Lemma: Q(v,v) estimates}, we have
    \begin{equation*}
        \rVert v_{\tau+1}\lVert_{C^{2,\alpha}} \leq C(2,\alpha ,M,g,\eta_i) t^{-\frac{2+\alpha}{2}}\rVert v_\tau \lVert^2_{C^{2,\alpha}} +\frac{1}{2}\rVert E(\Psi_{t,\eta_i}^{q(t)})(0,h)\lVert_{C^{2,\alpha}}.
    \end{equation*}
   By imposing the $\theta$ in \eqref{on theta} to satisfy the condition
    \begin{equation*}
    C(2,\alpha ,M,g,\eta_i) t^{-\frac{2+\alpha}{2}} \cdot \rVert E(\Psi_{t,\eta_i}^{q(t)})(0,h)\lVert_{C^{2,\alpha}}<\frac{1}{2},
    \end{equation*}
    we obtain 
    \begin{equation*}
        2\rVert E(\Psi_{t,\eta_i}^{q(t)})(0,h)\lVert_{C^{2,\alpha}}\cdot \rVert v_{\tau+1}\lVert_{C^{2,\alpha}} < \rVert v_\tau \lVert^2_{C^{2,\alpha}} +\rVert E(\Psi_{t,\eta_i}^{q(t)})(0,h)\lVert^2_{C^{2,\alpha}}.
    \end{equation*}
    Therefore, by induction from $\tau=0$, we have that for all $\tau\geq 0$ 
    \begin{equation}\label{control of v}
        \rVert v_\tau\lVert_{C^{2,\alpha}} <\rVert E(\Psi_{t,\eta_i}^{q(t)})(0,h)\lVert_{C^{2,\alpha}}.
    \end{equation}\par
    Next, we need to show $\{v_\tau\}$ is a Cauchy sequence:
    \begin{equation*}
        \begin{aligned}
            \rVert v_{\tau+1}-v_\tau\lVert_{C^{2,\alpha}}&\leq C(2,\alpha ,M,g,\eta_i) t^{-\frac{2+\alpha}{2}}\rVert v_\tau-v_{\tau -1}\lVert_{C^{2,\alpha}}\cdot (\rVert v_\tau\lVert_{C^{2,\alpha}}+\rVert v_{\tau -1}\lVert_{C^{2,\alpha}})\\
            &\leq 2 C(2,\alpha ,M,g,\eta_i) t^{-\frac{2+\alpha}{2}}\rVert E(\Psi_{t,\eta_i}^{q(t)})(0,h)\lVert_{C^{2,\alpha}}\cdot\rVert v_\tau-v_{\tau -1}\lVert_{C^{2,\alpha}},
        \end{aligned}
    \end{equation*}
    where we applied the quadratic estimate \eqref{quadratic estimate} to the definition of $v_\tau$ \eqref{definition of v_t}. By enforcing a stronger condition $2 C(2,\alpha ,M,g,\eta_i) t^{-\frac{2+\alpha}{2}}\rVert E(\Psi_{t,\eta_i}^{q(t)})(0,h)\lVert_{C^{2,\alpha}}<\frac{1}{2}$, we  obtain $\rVert v_{\tau+1}-v_\tau\lVert_{C^{2,\alpha}}<\frac{1}{2}\rVert v_{l}-v_{\tau -1}\lVert_{C^{2,\alpha}}$, demonstrating that $\{v_\tau\}$ is indeed a Cauchy sequence. Hence, we identify a unique solution $v\in C ^{2,\alpha}$ as the limit of the bounded Cauchy sequence $\{v_\tau\}$.\par
    Finally, we shall extend the regularity of the solution  $v\in C^{2,\alpha}$ to  $C^{r,\alpha }$ for $r\geq 3$, which is achieved by showing that $\rVert v_\tau\lVert_{C^{r,\alpha}}$ is bounded. Similar to the $C^{2,\alpha}$ case, we have
    \begin{equation*}
        \rVert v_{\tau+1}\lVert_{C^{r,\alpha}} \leq C(r,\alpha ,M,g,\eta_i) t^{-\frac{r+\alpha}{2}}\rVert v_\tau \lVert^2_{C^{r,\alpha}} +\frac{1}{2}\rVert E(\Psi_{t,\eta_i}^{q(t)})(0,h)\lVert_{C^{r,\alpha}},
    \end{equation*}
    if we again enforce: 
    \begin{equation*}
    C(r,\alpha ,M,g,\eta_i) t^{-\frac{r+\alpha}{2}} \cdot \rVert E(\Psi_{t,\eta_i}^{q(t)})(0,h)\lVert_{C^{r,\alpha}}<\frac{1}{2},
    \end{equation*}
    then we have $ \rVert v_\tau\lVert_{C^{r,\alpha}} <\rVert E(\Psi_{t,\eta_i}^{q(t)})(0,h)\lVert_{C^{r,\alpha}}$. Notice that here $t\rightarrow0_+$, so requiring: 
    \begin{equation*}
        C(r,\alpha ,M,g,\eta_i)  \cdot \rVert E(\Psi_{t,\eta_i}^{q(t)})(0,h)\lVert_{C^{r,\alpha}}<\frac{1}{2}t^{\frac{r+\alpha}{2}}
    \end{equation*} would also imply that  $\rVert E(\Psi_{t,\eta_i}^{q(t)})(0,h)\lVert_{C^{r,\alpha}}$ is bounded. Hence, the theorem got proven.
    \end{proof}
\section{The main theorem: conformal embeddings}
 In this section,   the formerly prepared propositions and theorems are employed to prove the main theorem of this paper.\par
The main theorem can be divided into two propositions: the first claims that we could find a family of conformal immersions $C_t$ depending on a function $k_t\in C^{r,\alpha}(M)$ of $O(t^l)$, hence we denote it as $C_{t,k_t}$, and the second checks that this $C_{t,k_t}$ is one to one, hence an embedding.
\begin{prop}[Conformal immersion]\label{conformal immersion}
For any integer $r\geq 2$  and $l$ satisfying $r+\alpha <l+\frac{1}{2}$,  there exists $t_0>0$ depending on $(r,\alpha,l,g,\eta_i)$, such that for the integer $q=q(t)\geq t^{-\frac{n}{2}-1},$ $0<t<t_0$, the truncated embedding $\Psi_{t,\eta_i}^{q(t)}\in C^{r,\alpha}(M,\mathbb{R}^{q(t)})$  can be perturbed to a family of conformal immersion $C_{t,k_t}$, parametrized by  $k_t\in K:=\{k_t\in C^{r,\alpha}(M,\mathbb{R})\big| \|k_t\|_{C^{r,\alpha}}=O(t^l)\}$,  such that for any $k_t\in K$, the aforementioned perturbation yields a unique $C^{r,\alpha}(M,\mathbb{R}^{q(t)})$ conformal immersion
    \begin{equation*}
        C_{t,k_t} : M\rightarrow\mathbb{R}^{q(t)}.
    \end{equation*} Moreover, the resulting conformal map satisfies:
    \begin{gather*}
       \|C_{t,k_t}-\Psi_{t,g(t),\eta_i}\|_{C^{r,\alpha}} = O(t^{l+\frac{1-r-\alpha}{2}}),\\
            \|C_{t,k^{a}_t}-C_{t,k^{b}_t} \|_{C^{r,\alpha}}< C(r,\alpha,M,g,\eta_i) t^{-\frac{r+\alpha}{2}} \|k_t^a-k_t^b\|_{C^{r,\alpha}}, \quad \forall\, k_t^a,k_t^b \in K.
    \end{gather*}
            
\end{prop}

\begin{proof}
The proof is to apply Theorem \ref{Banach fixed point} to our case. We note  that the expression of $P(\Psi_t)P^T(\Psi_t)$ in Proposition \ref{construction of PPT}, the estimates of $E(\Psi_t)$ in Proposition \ref{derivative estimate of E}, and the Theorem \ref{Banach fixed point} all work the same way for the truncated embedding $\Psi_{t,\eta_i}^{q(t)}$, provided that the estimate for the part that is truncated off approaches  $0$ exponentially  as presented in Proposition \ref{weyl law}.\par
 The proof goes as follows. Denote $h$ as the error term $O(t^l)$ in \eqref{error term Psi_^q(t)}, that is, $h:= (\Psi_{t,\eta_i}^{q(t)})^*g_{\mathrm{can}}-\frac{\tr_g(\Psi_{t,\eta_i}^{q(t)})^*g_{\mathrm{can}}}{n}g=O(t^l)$. In order to use Theorem \ref{Banach fixed point}, it is natural to consider $K=\{k_t\in C^{r,\alpha}(M,\mathbb{R})\big| \|k_t\|_{C^{r,\alpha}}=O(t^l)\}$. Let $k_t\in K$, and use the expression of $P(\Psi_{t,\eta_i}^{q(t)})P^T(\Psi_{t,\eta_i}^{q(t)})$ from Theorem \ref{construction of PPT} and the condition  $r+\alpha < l+\frac{1}{2}$,  we have
\begin{equation}\label{importance on l}
\begin{aligned}
   & t^{-\frac{r+\alpha}{2}} \cdot \rVert E(\Psi_{t,\eta_i}^{q(t)})(0,h-2k_t\cdot g)\lVert_{C^{r,\alpha}}\\<& C t^{-\frac{r+\alpha}{2}} \Big\lVert \big[\nabla_{j_1}\nabla_{j_2} \Psi_{t,\eta_i}^{q(t)}\big]^T_{1\leq j_1\leq j_2\leq n} \cdot O(t)\cdot (h-2k_t\cdot g) \Big\rVert_{C^{r,\alpha}}\\
    <& Ct^{-\frac{r+\alpha}{2}}\cdot (t^{-\frac{r+1+\alpha}{2}})\cdot t \cdot \|h-2k_t\cdot g\|_{C^{r,\alpha}} \\
    <& C t^{-r-\alpha+\frac{1}{2}+l}\rightarrow 0 ,\qquad \text{as } t\rightarrow0_+.
\end{aligned}
\end{equation}
 Then by Theorem \ref{Banach fixed point},  for each fixed $k_t\in K$, we get the unique solution $v_{k_t}\in C^{r,\alpha}(M,\mathbb{R}^q)$ satisfying:
\begin{equation*}
   E(\Psi_{t,\eta_i}^{q(t)})(0,-\frac{1}{2}h+k_t\cdot g)+Q_{\Psi^{q(t)}_{t,\eta_i}}(v_{k_t},v_{k_t})=v_{k_t},
\end{equation*}
notice we denote it as $v_{k_t}$ for it really depends on the fixed $k_t\in C^{r,\alpha}$.  By the Theorem \ref{construction of E_c} and  the discussion in Section 3, $v_{k_t}$ satisfies
\begin{equation*}
    P_c(\Psi_{t,\eta_i}^{q(t)})\cdot (v_{k_t}-Q(v_{k_t},v_{k_t}))=\begin{bmatrix}
        0\\
        h
    \end{bmatrix}.
\end{equation*}
Thus by the discussion in Section 3, such a $v_{k_t}$ solves 
\begin{equation*}
    h=\mathrm{tr}^\perp_g(\nabla \Psi_{t,\eta_i}^{q(t)}\cdot \nabla v_{k_t})+\mathrm{tr}^\perp_g(\nabla v_{k_t}\cdot \nabla \Psi_{t,\eta_i}^{q(t)})
    +\mathrm{tr}^\perp_g(\nabla v_{k_t}\cdot \nabla v_{k_t}).
\end{equation*}
Hence  we attain the desired conformal immersions:
\begin{equation}
    C_{t,k_t}:=\Psi_{t,\eta_i}^{q(t)}+v_{k_t}.
\end{equation}
\par
 For the difference term $v_{k_t}$, as  the computation shown in \eqref{control of v}, we know $\lVert v_{k_t}\rVert_{C^{r,\alpha}}<\lVert E(\Psi_{t,\eta_i}^{q(t)})(0,h+2k_t\cdot g)\rVert_{C^{r,\alpha}}<C t^{l+\frac{1}{2}-\frac{r+\alpha}{2}}$, hence 
 \begin{equation*}
     \|C_{t,k_t}-\Psi_{t,g(t),\eta_i}\|_{C^{r,\alpha}}\leq\lVert v_{k_t}\rVert_{C^{r,\alpha}}+ \|\Psi_{t,\eta_i}^{q(t)}-\Psi_{t,g(t),\eta_i}\|<Ct^{l+\frac{1-r-\alpha}{2}}.
 \end{equation*}
Moreover, using Proposition \ref{derivative estimate of E} and \eqref{quadratic estimate}, we have 
 \begin{align}
    & \|C_{t,k_t^a}-C_{t,k_t^b}\|_{C^{r,\alpha}}=\|v_{k_t^a}-v_{k_t^b}\|_{C^{r,\alpha}}\notag\\
    \leq &\|E(\Psi_{t,\eta_i}^{q(t)})(0,k_t^a\cdot g-k_t^b\cdot g)\|_{C^{r,\alpha}}+\|Q_{\Psi^{q(t)}_{t,\eta_i}}(v_{k_t^a},v_{k_t^a})-Q_{\Psi^{q(t)}_{t,\eta_i}}(v_{k_t^b},v_{k_t^b})\|_{C^{r,\alpha}}\notag\\
    \leq&C(r,\alpha,M,g,\eta_i)t^{-\frac{r+\alpha}{2}}\left(\|k_t^a-k_t^b\|_{C^{r,\alpha}}+ \lVert v_{k_t^a}-v_{k_t^b}\rVert_{C^{r,\alpha}}\left(\lVert v_{k_t^a}\rVert_{C^{r,\alpha}}+\lVert v_{k_t^b}\rVert_{C^{r,\alpha}}\right)\right)\notag\\
    <&C(r,\alpha,M,g,\eta_i)t^{-\frac{r+\alpha}{2}}\left(\|k_t^a-k_t^b\|_{C^{r,\alpha}}+  t^{l+\frac{1}{2}-\frac{r+\alpha}{2}}\lVert v_{k_t^a}-v_{k_t^b}\rVert_{C^{r,\alpha}} \right),\label{equ-last inequality of v_1-v_2}
 \end{align}
 where we used that  $\lVert v_{k_t}\rVert_{C^{r,\alpha}}<C t^{l+\frac{1}{2}-\frac{r+\alpha}{2}}$ in the last inequality. Since $l+\frac{1}{2}>r+\alpha$, there exists a $t_0$ such that $\forall t\in(0,t_0)$,
 \begin{equation*}
   C(r,\alpha,M,g,\eta_i) t^{l+\frac{1}{2}-r-\alpha}<\frac{1}{2},
 \end{equation*}
     where the constant $C(r,\alpha,M,g,\eta_i)$ is as in \eqref{equ-last inequality of v_1-v_2}. Thus, we obtain
     \begin{equation*}
         \frac{1}{2}\|v_{k_t^a}-v_{k_t^b}\|_{C^{r,\alpha}}=\frac{1}{2}\|C_{t,k_t^a}-C_{t,k_t^b}\|_{C^{r,\alpha}}<C(r,\alpha,M,g,\eta_i)t^{-\frac{r+\alpha}{2}}\|k_t^a-k_t^b\|_{C^{r,\alpha}}.
     \end{equation*}
     This concludes the proof. 
\end{proof}
\begin{rema}
    Regarding the inequality $ \|C_{t,k^{a}_t}-C_{t,k^{b}_t} \|_{C^{r,\alpha}}< C(r,\alpha,M,g,\eta_i) t^{-\frac{r+\alpha}{2}} \|k_t^a-k_t^b\|_{C^{r,\alpha}},$  $  \forall\, k_t^a,k_t^b \in K,$ it is natural to consider the $C^{r,\alpha}$ norm with $r\geq 2$, $0<\alpha<1$,  instead of the lower order norms such as $C^{0,\alpha}$ or $C^{1,\alpha}$. Indeed, 
recalling the definition of $Q_{u}(v,v)$ in \eqref{defi of Q(v,v)} and using the notation of $E(u)$, we have 
\begin{align*}
    Q_u(v,v)&=E(u)\left(-(\Delta-1)^{-1}\left(\Delta v\cdot \nabla v\right),M_{ij}(v)\right). 
 \end{align*}
When estimating $\lVert Q_u(v,v) \rVert_{C^{r,\alpha}}$, we observe that classical Schauder estimates provide that  $\lVert(\Delta-1)^{-1}(\Delta v\cdot \nabla v)\rVert_{C^{r,\alpha}}<C\lVert v \rVert^2_{C^{r,\alpha}}$ only for $r\geq 2$, $0<\alpha<1$; a similar requirement holds for the term $\lVert M_{ij}(v)\rVert_{C^{r,\alpha}}$.
\end{rema}
\begin{prop}[Injectivity]\label{Inject}
Let $(M,g)$ be a compact Riemannian manifold with smooth metric $g$. Then, there exists a positive constant $\delta_0$ such that for $0<t\leq \delta_0$ and $q(t)\geq t^{-\frac{n}{2}-1}$, the truncated heat kernel mapping $\Psi_{t,\eta_i}^{q(t)}:M\rightarrow \mathbb{R}^{q(t)}$ possesses the property of point distinguishability. In other words, for any $x\neq y$ in $M$, one has $\Psi_{t,\eta_i}^{q(t)}(x)\neq \Psi_{t,\eta_i}^{q(t)}(y)$. The property of point distinguishability also holds for the perturbed almost conformal immersion  $\Psi_{t,g(t),\eta_i}$ (as defined in Definition \ref{defi-canonical almost conformal embedding}), and so is the conformal mapping $C_{t,k_t}$ for any $k_t\in C^{r,\alpha}(M)$ of $O(t^l)$.
\end{prop}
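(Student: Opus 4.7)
The plan is to establish point distinguishability in three stages: first for the modified canonical embedding $\tilde{\Psi}_t$ by combining heat-kernel asymptotics with the (almost) conformal expansion of Proposition \ref{part 1 of mt}, then for the truncation $\Psi_{t,\eta_i}^{q(t)}$ via the tail estimate of Proposition \ref{weyl law}, and finally for $C_{t,k}$ by a smallness argument on the perturbation $v_k$. The backbone identity is
\begin{equation*}
\lvert\Psi_t(x)-\Psi_t(y)\rvert^2_{\ell^2}=2(4\pi)^{n/2}t^{(n+2)/2}\bigl(H(x,x,t)+H(y,y,t)-2H(x,y,t)\bigr),
\end{equation*}
which reduces injectivity to the small-$t$ behavior of the heat kernel, and the analogous identity holds for each metric $g(t)=g+\sum_{i=1}^{l-1} h_i t^i$ with its own heat kernel.

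First I would fix $\epsilon_0>0$ smaller than the injectivity radius of $(M,g)$ and split $M\times M\setminus\Delta$ into the near-diagonal region $\{d(x,y)<\epsilon_0\}$ and the far region $\{d(x,y)\geq\epsilon_0\}$. On the near-diagonal region, Proposition \ref{part 1 of mt} gives $\tilde{\Psi}_t^*g_{\mathrm{can}}=\lambda_t^2\, g+O(t^l)$ with $\lambda_t^2\to 1$ in $C^0$ as $t\to 0_+$, so $d\tilde{\Psi}_t$ has full rank with a uniform lower bound; a standard inverse-function argument on the compact manifold $M$ then yields a uniform radius $\delta>0$ on which $\tilde{\Psi}_t$ is injective with $\lvert\tilde{\Psi}_t(x)-\tilde{\Psi}_t(y)\rvert^2\geq c_1\, d(x,y)^2$. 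On the far region, the Gaussian upper bound for the heat kernel of $g(t)$, uniform in $t\in[0,t_0]$ by compactness of the family $\{g(t)\}$, forces $H_{g(t)}(x,y,t)=o(t^{-n/2})$ while $H_{g(t)}(x,x,t)\sim(4\pi t)^{-n/2}$, so the identity above gives $\lvert\tilde{\Psi}_t(x)-\tilde{\Psi}_t(y)\rvert^2\geq c_2\, t$ for all $x,y$ with $d(x,y)\geq\epsilon_0$. This proves injectivity of $\tilde{\Psi}_t$; Proposition \ref{weyl law} then bounds the truncated tail by $C e^{-t^{-\rho/n}}$, which is negligible compared to either $c_1 d(x,y)^2$ or $c_2 t$, so $\Psi_{t,\eta_i}^{q(t)}$ inherits both lower bounds for $t$ sufficiently small.

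Finally, for $C_{t,k}=\Psi_{t,\eta_i}^{q(t)}+v_k$, I would combine the quantitative bounds above with the estimate $\lVert v_k\rVert_{C^{s,\alpha}}=O(t^{l+(1-s-\alpha)/2})$ from the preceding Proposition. Under the standing hypothesis $s+\alpha<l+\tfrac{1}{2}$ (and $s\geq 2$, $l\geq 1$), this gives both $\lVert v_k\rVert_{C^1}=o(1)$ and $\lVert v_k\rVert_{C^0}=o(\sqrt{t})$. On the near-diagonal region, the mean-value bound $\lvert v_k(x)-v_k(y)\rvert\leq\lVert v_k\rVert_{C^1}d(x,y)$ gives $\lvert C_{t,k}(x)-C_{t,k}(y)\rvert\geq(\sqrt{c_1}-o(1))d(x,y)>0$; on the far region, $\lvert v_k(x)-v_k(y)\rvert\leq 2\lVert v_k\rVert_{C^0}=o(\sqrt{t})$ is swallowed by $\sqrt{c_2 t}$, so again $C_{t,k}(x)\neq C_{t,k}(y)$.

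The main obstacle is ensuring that the radius $\delta$ and the constants $c_1,c_2$ are genuinely uniform both in $t\in(0,t_0]$ and across the smoothly varying family $\{g(t)\}$, rather than just for a single metric: this requires a uniform short-time Minakshisundaram--Pleijel expansion and a uniform off-diagonal Gaussian upper bound over a compact family of smooth Riemannian metrics. These are classical, but must be invoked explicitly so that the perturbation $v_k$, whose $C^0$ and $C^1$ norms vanish only at a polynomial rate in $t$, can be absorbed for $0<t\leq\delta_0$.
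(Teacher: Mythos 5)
The paper disposes of this proposition in one line, by observing that $\tilde{\Psi}_t$ is the normalized (truncated) heat kernel embedding of the metric $g(t)$ and invoking the point-distinguishing argument of \cite[Proposition 36]{WZ}; your proposal reconstructs that argument, and its overall architecture (heat-kernel identity, near/far dichotomy, tail estimate, absorption of $v_k$) is the right one. However, there is a genuine quantitative error in your near-diagonal step. The bound $\lvert\tilde{\Psi}_t(x)-\tilde{\Psi}_t(y)\rvert^2\geq c_1\,d(x,y)^2$ cannot hold on a $t$-independent region $\{0<d(x,y)<\epsilon_0\}$: your own backbone identity, combined with $\lvert 2H(x,y,t)\rvert\leq H(x,x,t)+H(y,y,t)$ and the on-diagonal asymptotics $H_{g(t)}(z,z,t)\sim(4\pi t)^{-n/2}$, forces $\lvert\tilde{\Psi}_t(x)-\tilde{\Psi}_t(y)\rvert^2=O(t)$ uniformly on $M\times M$, so for a fixed separation $d(x,y)=\epsilon_0/2$ the left side tends to $0$ while $c_1 d(x,y)^2$ is a fixed positive constant. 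Relatedly, the ``standard inverse-function argument'' cannot produce a $t$-independent radius of injectivity with that lower bound, because it requires control of second derivatives and $\lVert\nabla\nabla\Psi_t\rVert\sim t^{-1/2}$ (this is exactly the $\frac{1}{2t}$ factor appearing in the expression for $P(\Psi_t)P^T(\Psi_t)$). As written, your regions therefore do not cover $M\times M\setminus\Delta$: the quadratic lower bound is only valid at scale $d(x,y)\lesssim\sqrt{t}$, and the regime $\sqrt{t}\lesssim d(x,y)<\epsilon_0$ is handled by neither of your two estimates.

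The fix is standard and preserves the rest of your argument: split at scale $\sqrt{t}$ rather than at a fixed $\epsilon_0$. For $d(x,y)\leq A\sqrt{t}$, the almost-conformality (with conformal factor tending to $1$) together with the $O(t^{-1/2})$ bound on second derivatives gives $\lvert\tilde{\Psi}_t(x)-\tilde{\Psi}_t(y)\rvert\geq c\,d(x,y)$ for $A$ chosen suitably. For $d(x,y)\geq A\sqrt{t}$, the uniform short-time on-diagonal asymptotics together with the off-diagonal Gaussian upper bound $H_{g(t)}(x,y,t)\leq Ct^{-n/2}e^{-d(x,y)^2/Ct}$ (uniform over the compact family $\{g(t)\}$, as you correctly insist) give $H(x,x,t)+H(y,y,t)-2H(x,y,t)\geq c\,t^{-n/2}$, hence $\lvert\tilde{\Psi}_t(x)-\tilde{\Psi}_t(y)\rvert^2\geq c_2 t$ on all of that region, i.e.\ your ``far'' estimate is valid all the way down to scale $A\sqrt{t}$. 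With this corrected dichotomy your truncation and perturbation steps go through verbatim: the truncated tail is $O(e^{-t^{-\rho/n}})$ and hence negligible, and $\lVert v_k\rVert_{C^0}=O(t^{l+\frac{1-s-\alpha}{2}})=o(\sqrt{t})$ under $s+\alpha<l+\frac{1}{2}$ and $l\geq 1$, so $v_{k}$ is absorbed by $c\,d(x,y)$ on the first region and by $\sqrt{c_2 t}$ on the second.
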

\begin{proof}
It can be easily obtained by the same argument in \cite[Proposition 36]{WZ}, since the almost conformal mapping $\Psi_{t,g(t),\eta_i}$ is also the heat kernel of some metric $g_{t}$.
\end{proof}
\begin{rema}
From Proposition \ref{part 1 of mt} and \ref{conformal immersion}, we see that the resulting conformal embeddings $C_{t,k_t}$  satisfy
\begin{equation}
    \begin{aligned}
    (C_{t,k_t})^*g_\mathrm{can}=&g+t(\frac{1}{n}\tr_g A_1(g)+\eta_1) g+t^2(\frac{1}{n}\tr_g (A_2(g)+A_{1,1}(h_1))+\eta_2) g\\
    &+\cdots+t^{l-1}(\frac{1}{n}\tr_g\sum_{i+j} A_{i,j}(h_1,\cdots,h_j)+\eta_{l-1})g+k_t \cdot g  , 
    \end{aligned}
\end{equation}
where $A_{i,j}$'s are as defined in Theorem \ref{thm-mainthm of BBG} and Proposition \ref{part 1 of mt}, and $k_t\in C^{r,\alpha}(M,\mathbb{R})$ is of $O(t^l)$. \par
This can be compared with the canonical isometric embeddings $I_t$ constructed by  by Wang-Zhu  (cf. \cite[Theorem 1]{WZ}), for which $I_t^*g_{\rm can}=g$.
\end{rema}

\bibliographystyle{abbrv}

\begin{thebibliography}{1}

\bibitem{BBG}
P.~Bérard, G.~Besson, and S.~Gallot,
\newblock {\em Embedding {R}iemannian manifolds by their heat kernel,}
\newblock  Geometric and functional analysis, 4(4):373--398, 1994.





\bibitem{G1}
M.~G{\"u}nther,
\newblock {\em On the perturbation problem associated to isometric embeddings of
  {R}iemannian manifolds,}
\newblock  Annals of Global Analysis and Geometry, 7:69--77, 1989.

\bibitem{G2}
M.~G{\"u}nther,
\newblock {\em Isometric embeddings of {R}iemannian manifolds,}
\newblock  Proceedings of the International Congress of Mathematicians,
  August 21-29, 1990, Kyoto, Japan, 1990.

\bibitem{GH}
P.~Griffiths and J.~Harris,
\newblock  {\em Principles of Algebraic Geometry,} 
\newblock John Wiley \& Sons, Inc., New York, 1994.

\bibitem{H2}
L.~H{\"o}rmander,
\newblock {\em The Analysis of Linear Partial Differential Operators III:
  Pseudo-Differential Operators,}
\newblock Classics in Mathematics. Springer Berlin Heidelberg, 2007.





%
\bibitem{N2}
J. Nash,
\newblock {\em The imbedding problem for {R}iemannian manifolds,}
\newblock  Annals of Mathematics, 63(1):20--63, 1956.



\bibitem{Pt}
J.~W. Portegies,
\newblock {\em Embeddings of Riemannian manifolds with heat kernels and
  eigenfunctions,}
\newblock  Communications on Pure and Applied Mathematics, 69(3):478--518,
  2016.

\bibitem{Te}
D.~Tewodrose, 
\newblock {\em A survey on spectral embeddings and their application in data analysis,}
\newblock Séminaire de théorie spectrale et géométrie, 35: 197--244, 2021.


\bibitem{WZ}
X.~Wang and K.~Zhu,
\newblock {\em Isometric embeddings via heat kernel,}
\newblock  Journal of Differential Geometry, 99(3):497-538, 2015.





\end{thebibliography}

\Addresses

\end{document}